\newtheorem{theorem}{Theorem}[section]
\newtheorem{lemma}[theorem]{Lemma}
\newtheorem{proposition}[theorem]{Proposition}
\newtheorem{corollary}[theorem]{Corollary}
\newtheorem{assumption}[theorem]{Assumption}
\begin{document}
\setlength\arraycolsep{2pt}
\title{Bandwidth Selection for the Wolverton-Wagner Estimator}
\author{Fabienne COMTE*}
\address{*Laboratoire MAP5, Universit\'e Paris Descartes, Paris, France}
\email{fabienne.comte@parisdescartes.fr}
\author{Nicolas MARIE**}
\address{**Laboratoire Modal'X, Universit\'e Paris Nanterre, Nanterre, France}
\email{nmarie@parisnanterre.fr}
\address{**ESME Sudria, Paris, France}
\email{nicolas.marie@esme.fr}
\keywords{}
\date{}
\maketitle
\noindent
%

% Abstract.

%
\begin{abstract}
For $n$ independent random variables having the same H\"older continuous density, this paper deals with controls of the Wolverton-Wagner's estimator MSE and MISE. Then, for a bandwidth $h_n(\beta)$, estimators of $\beta$ are obtained by a Goldenshluger-Lepski type method and a Lacour-Massart-Rivoirard type method. Some numerical experiments are provided for this last method.
\end{abstract}
%
%\noindent
%\textbf{L\'egende :}\\
%Corrections de la version p\'ec\'edente en noir.\\
% Nouveaut\'es en bleu.\\
% Remarques et questions en rouge.
\tableofcontents
%\noindent
%\textbf{MSC2010:}
%

% Section : Introduction.

%
\section{Introduction}
Consider $n\in\mathbb N^*$ independent random variables $X_1,\dots,X_n$ having the same probability distribution of density $f$ with respect to Lebesgue's measure.
\\
\\
The usual Parzen \cite{Par} - Rosenblatt \cite{Ros} kernel estimator of $f$ is defined by
\begin{displaymath}
\widehat f_{n,h}(x) :=
\frac{1}{nh}\sum_{k = 1}^{n}K\left(\frac{X_k - x}{h}\right)
\textrm{ $;$ }x\in\mathbb R,
\end{displaymath}
where $h > 0$ and $K :\mathbb R\rightarrow\mathbb R_+$ is a kernel. In 1969, Wolverton and Wagner introduced in \cite{WW69} a variant of $\widehat f_{n,h}(x)$ defined by
\begin{equation}\label{WW}
\widehat f_{n,\mathbf h_n}(x) :=
\frac{1}{n}\sum_{k = 1}^{n}\frac{1}{h_k}K\left(\frac{X_k - x}{h_k}\right),
\end{equation}
where $\mathbf h_n = (h_1,\dots,h_n)$ and $0 < h_n <\dots < h_1$. Thanks to its recursive form, this type of estimator is well-suited to online treatment of data: by denoting $\mathbf h_{n+1}=(h_1, \dots, h_n, h_{n+1})$,
\begin{displaymath}
\widehat f_{n+1,\mathbf h_{n+1}}(x) = \frac n{n+1} \widehat f_{n,\mathbf h_n}(x)
+ \frac{1}{(n+1) h_{n+1}} K\left(\frac{X_{n+1}-x}{h_{n+1}}\right).
\end{displaymath}
Thus, up-dating the estimator when new observations are available is easy and fast.
\\
\\
We can mention here that several variants or generalizations of the Wolverton and Wagner (WW) estimator have been proposed: see Yamato \cite{Yama}, Wegman and Davies \cite{WD79}, Hall and Patil \cite{HP}. They were studied from almost sure convergence point of view, or asymptotic rates of convergence under fixed regularity assumptions. We choose to focus on Wolverton and Wagner estimator but our results and discussions may be applied to these.
\\
\\
Theoretical developments concerning either classical Parzen-Rosenblatt or WW recursive kernels estimators occurred recently following different and independent roads.
\\
On the one hand, several recent works are dedicated to efficient and data-driven bandwidth selection, see Goldenshluger and Lespki \cite{GL11} and several companion papers by these authors, or Lacour {\it et al.} \cite{LMR17} who proposed a modification of the method. The original Goldenshluger and Lepski (GL) method was difficult to implement because it turned out to be numerically consuming and with calibration difficulties, see Comte and Rebafka \cite{CR}. This is why the improvement proposed in Lacour {\it et al.} \cite{LMR17} has both theoretical and practical interest.
\\
On the other hand, the increase of computer speed and of data sets sizes made fast up-dating of estimators mandatory. The theoretical developments in this context are in the field of stochastic algorithms (see e.g. Mokkadem {\it et al.} \cite{MPS}) or in view of specific applications (see Bercu {\it et al.} \cite{BCD}).
\\
\\
Bandwidths have to be chosen for WW estimators as for Parzen-Rosenblatt ones, and this choice is crucial to obtain good performances. 
This is why we propose to extend to this context general risk study as described in Tsybakov \cite{TSYBAKOV09} and the GL method as improved by Lacour {\it et al.} \cite{LMR17}. More precisely, considering for instance $h_k = k^{-\gamma}$ for a parameter $\gamma > 0$ in formula (\ref{WW}), we study adaptive selection of $\gamma$. We prove risk bounds for the Mean Integrated Squares Error (MISE) of the resulting estimator $\widehat f_{n,\widehat{\mathbf h}_n}$ where $\widehat{\mathbf h}_n = (\widehat h_1, \dots,\widehat h_n)$ and $\widehat h_k=k^{-\widetilde\gamma}$.
\\
Amiri \cite{AMIRI} proved that for $f$ with regularity 2 and an adequate choice of the bandwidth, Parzen-Rosenblatt's estimator had asymptotical smaller risk than the WW estimator. We propose an empirical finite sample study of this question, together with an interesting insight on the gain brought by higher order kernels.
\\
Now, clearly, plugging $\widetilde\gamma_n=\widetilde\gamma(X_1, \dots, X_n)$ in the estimator makes the recursivity fail. Therefore, an adequate strategy is required, either  with initial estimation of $\gamma$ on the first $n$-sample and recursive up-dating relying on this "frozen" value on the following $N$-sample,or with adequate matrix updating for $\widetilde\gamma_{n+1}?$ selection. This is what is experimented in our final section, and empirically illustrated and discussed.
\\
\\
This paper provides in Section \ref{MSE} controls of the MSE and of the MISE of the estimator $\widehat f_{n,\mathbf h_n}$ under general regularity conditions on $f$. Then, in Section \ref{GL}, the well-known Goldenshluger-Lepski's bandwidth selection method for Parzen-Rosenblatt's estimator is extended to Wolverton-Wagner's estimator. Lastly, an estimator in the spirit of Lacour {\it et al.} \cite{LMR17} is studied from both theoretical and practical point of view in Section \ref{LMR}.
In particular, a recursive global strategy is proposed. Concluding remarks are given in Section \ref{conclusion}. Proofs are relegated in Section \ref{Proofs}.
\\
\\
\textbf{Notation.}
\begin{enumerate}
 \item Consider $\alpha\in (0,1)$. The space of $\alpha$-H\"older continuous functions from ${\mathbb T}$ into $\mathbb R$ is denoted by $C^{\alpha}(\mathbb T)$ and equipped with the $\alpha$-H\"older semi-norm $\|.\|_{\alpha}$ defined by
 \begin{displaymath}
 \|\varphi\|_{\alpha} :=
 \sup_{x,y\in\mathbb T : x\not= y}
 \frac{|\varphi(y) -\varphi(x)|}{|y - x|^{\alpha}}
 \textrm{ $;$ }
 \forall\varphi\in C^{\alpha}(\mathbb T).
 \end{displaymath}
 \item Let $\beta , L> 0$, and ${\mathbb T}$ an interval of ${\mathbb R}$. The H\"older class $\Sigma(\beta, L)$ on ${\mathbb T}$ is the set of functions $\varphi~: {\mathbb T} \rightarrow {\mathbb R}$ such that $\varphi^{(\ell)}$, where  $\ell :=\lfloor\beta\rfloor$, is the greatest integer less than or equal to $\beta$, exists and satisfies $\|\varphi\|_{\beta -\ell}\leqslant L$.
 \item Let $\beta , L> 0$. The Nikol'ski class ${\mathcal H}(\beta, L)$ is the set of function $\varphi: {\mathbb R}\rightarrow {\mathbb R}$ such that $\varphi^{(\ell)}$ exists and satisfies 
 $$\left[\int_{-\infty}^{\infty}\left(\varphi^{(\ell)}(x+t)-\varphi^{(\ell)}(x)\right)^2dx \right]^{1/2} \leqslant L |t|^{\beta-\ell}, \quad \forall t \in {\mathbb R}.$$
%  and set
% %
% \begin{displaymath}
% \Sigma(\beta) :=
% \{\varphi\in C^l(\mathbb R) :\|\varphi^{(l)}\|_{\beta - l} <\infty\}.
% \end{displaymath}
 %
 \item For every square integrable function $f,g :\mathbb R\rightarrow\mathbb R$, $\|f\|_2^2=\int_{-\infty}^{+\infty}f^2(x)dx$, $\langle f, g\rangle= \int_{-\infty}^{+\infty} f(x)g(x)dx$, and 
 \begin{displaymath}
 (f\ast g)(x) :=
 \int_{-\infty}^{\infty}
 f(x - y)g(y)dy
 \textrm{ $;$ }
 x\in\mathbb R.
 \end{displaymath}
 \item $K_{\varepsilon} := (1/\varepsilon) K(\cdot /\varepsilon)$ for every $\varepsilon > 0$.
\end{enumerate}
The definitions of $\Sigma(\beta, L)$ and ${\mathcal H}(\beta, L)$ can be found in Tsybakov~(2009, Chapter 1). 
% Section : Controls of the MSE and of the MISE of Wolverton-Wagner's estimator.

%
\section{Bounds on the MSE and the MISE of Wolverton-Wagner's estimator}\label{MSE}
Consider $\beta > 0$ and $l :=\lfloor\beta\rfloor$. Throughout this section, the map $K$ fulfills the following assumption.
%

% Assumption : Conditions on the Kernel.

%
\begin{assumption}\label{conditions_Kernel}
The map $y\in\mathbb R\mapsto y^iK(y)$ is integrable for every $i\in \{0, 1, \dots, l\}$,
\begin{displaymath}
\int_{-\infty}^{\infty}K(y)dy = 1, \quad \int_{-\infty}^{\infty}K^2(y)dy<+\infty, \quad \int_{-\infty}^{\infty}|z|^{\beta}|K(z)|dz:=C_\beta(K)<+\infty, 
\end{displaymath}
\begin{displaymath} \quad\mbox{ and }\quad
\int_{-\infty}^{\infty} y^iK(y)dy = 0,\;
\forall
i\in \{1, \dots, l\}. 
\end{displaymath}
\end{assumption}
\noindent
Let us establish a control of the MSE of Wolverton-Wagner's estimator under the following condition on $f$.
%

% Assumption : Hlder's condition on the density.

%
\begin{assumption}\label{Holder_condition}
The map $f$ belongs to the H\"older ball $\Sigma(\beta, L)$.
\end{assumption}
%

% Proposition : Control of the MSE of Wolverton-Wagner's estimator.

%
\begin{proposition}\label{control_MSE_f}
Under Assumptions \ref{conditions_Kernel} and \ref{Holder_condition}, there exists a constant $c > 0$, such that
for all $x \in {\mathbb T}$
\begin{displaymath}
\mathbb E(|\widehat f_{n,\mathbf h_n}(x) - f(x)|^2)
\leqslant
\frac{c}{n^2}\left(\left|\sum_{k = 1}^{n}
\frac{h_{k}^{\beta}}{l!}\right|^2 +\sum_{k = 1}^{n}\frac{1}{h_k}\right), 
\end{displaymath}
where $c$ depends on $\beta, L, \|K\|_2$ and $C_\beta(K)$ (but not on $n$ and $h_1,\dots,h_n$). 
\end{proposition}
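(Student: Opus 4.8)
The plan is to start from the bias--variance decomposition
$$\mathbb E(|\widehat f_{n,\mathbf h_n}(x) - f(x)|^2) = \left(\mathbb E(\widehat f_{n,\mathbf h_n}(x)) - f(x)\right)^2 + \mathrm{Var}\big(\widehat f_{n,\mathbf h_n}(x)\big)$$
and to bound the two terms separately. Writing $\widehat f_{n,\mathbf h_n}(x) = \frac1n\sum_{k=1}^n K_{h_k}(X_k - x)$ and using that $X_1,\dots,X_n$ are i.i.d.\ with density $f$, one has $\mathbb E(\widehat f_{n,\mathbf h_n}(x)) = \frac1n\sum_{k=1}^n \mathbb E(K_{h_k}(X_k - x))$ and, by independence, $\mathrm{Var}(\widehat f_{n,\mathbf h_n}(x)) = \frac1{n^2}\sum_{k=1}^n\mathrm{Var}(K_{h_k}(X_k - x))$. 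It therefore suffices to bound $|\mathbb E(K_{h_k}(X_k-x)) - f(x)|$ and $\mathrm{Var}(K_{h_k}(X_k-x))$ for each fixed $k$, the bias being then handled by the triangle inequality.

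For the bias per term, the change of variable $z = (y - x)/h_k$ yields $\mathbb E(K_{h_k}(X_k - x)) = \int_{\mathbb R} K(z) f(x + h_k z)\,dz$, hence, since $\int K = 1$, $\mathbb E(K_{h_k}(X_k-x)) - f(x) = \int_{\mathbb R} K(z)\big(f(x + h_k z) - f(x)\big)\,dz$. By Taylor's formula with Lagrange remainder at order $l = \lfloor\beta\rfloor$ (or, when $l = 0$, directly the H\"older bound on $f$), $f(x + h_k z) - f(x) = \sum_{j=1}^{l}\frac{f^{(j)}(x)}{j!}(h_k z)^j + \frac{f^{(l)}(\xi) - f^{(l)}(x)}{l!}(h_k z)^l$ for some $\xi$ lying between $x$ and $x + h_k z$. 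Integrating against $K$, the moment conditions $\int z^j K(z)\,dz = 0$, $j = 1,\dots,l$, from Assumption \ref{conditions_Kernel} cancel the polynomial part; only the remainder survives. Bounding $|f^{(l)}(\xi) - f^{(l)}(x)| \leqslant L|\xi - x|^{\beta - l} \leqslant L|h_k z|^{\beta - l}$ by Assumption \ref{Holder_condition} and using $\int_{\mathbb R}|z|^\beta |K(z)|\,dz = C_\beta(K)$ gives $|\mathbb E(K_{h_k}(X_k-x)) - f(x)| \leqslant \frac{L\,C_\beta(K)}{l!}\,h_k^\beta$. Averaging over $k$ and squaring then produces the first term of the bound, with a constant of order $(L\,C_\beta(K))^2$.

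For the variance per term, I would use $\mathrm{Var}(K_{h_k}(X_k - x)) \leqslant \mathbb E(K_{h_k}(X_k - x)^2)$ and, after the same change of variable, $\mathbb E(K_{h_k}(X_k - x)^2) = \frac1{h_k}\int_{\mathbb R} K(z)^2 f(x + h_k z)\,dz \leqslant \frac{\|f\|_\infty\,\|K\|_2^2}{h_k}$. Here one invokes the (standard) fact that a probability density belonging to $\Sigma(\beta, L)$ is bounded by a constant depending only on $\beta$ and $L$ --- a large value of $f$ at a point would, through the H\"older/Taylor control, force $f$ to stay large on a whole neighbourhood and contradict $\int f = 1$ --- so that the resulting constant depends only on $\beta$, $L$ and $\|K\|_2$. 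Summing over $k$ gives $\mathrm{Var}(\widehat f_{n,\mathbf h_n}(x)) \leqslant \frac{c_2}{n^2}\sum_{k=1}^n\frac1{h_k}$, and combining the two estimates with $c$ the larger of the two constants completes the argument.

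The genuinely delicate point is the bias estimate. One must make sure the argument is uniform in $x \in {\mathbb T}$, that the intermediate point $\xi = \xi(z, x, h_k)$ coming from the mean value theorem raises no measurability or integrability issue (it does not, since only the crude bound $|\xi - x| \leqslant |h_k z|$ is needed inside the integral), and that the computation remains valid when $\beta$ is an integer: then $l = \beta$, Assumption \ref{Holder_condition} is an oscillation bound on $f^{(l)}$, and the same chain of inequalities goes through with the exponent $\beta - l = 0$.
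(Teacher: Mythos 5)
Your proof is correct and follows essentially the same route as the paper: the bias--variance decomposition, a Taylor--Lagrange expansion whose polynomial part is killed by the kernel moment conditions and whose remainder is controlled by the H\"older bound on $f^{(l)}$ together with $C_\beta(K)$, and a second-moment bound for the variance using that a density in $\Sigma(\beta,L)$ is uniformly bounded by a constant depending only on $\beta$ and $L$ (the paper cites Theorem 1.1 of Tsybakov for this last fact, which is exactly the observation you sketch). No gaps.
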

\noindent
Now, let us establish a control of the MISE of Wolverton-Wagner's estimator under Nikolski's condition on $f$.
%

% Assumption : Nikolski's condition on the density.

% 
\begin{assumption}\label{Nikolski_condition}
The map $f$ belongs to the Nikol'ski ball ${\mathcal H}(\beta, L)$.
%The map $f$ belongs to $C^l(\mathbb R)$ and there exists a constant $N(f) > 0$ such that
%%
%\begin{displaymath}
%\left(\int_{-\infty}^{\infty}|f^{(l)}(y +\varepsilon) - f^{(l)}(y)|^2dy\right)^{1/2}
%\leqslant
%N(f)|\varepsilon|^{\beta - l}
%\textrm{ $;$ }
%\forall\varepsilon\in\mathbb R.
%\end{displaymath}
%
\end{assumption}
%

% Proposition : Control of the MISE of Wolverton-Wagner's estimator.

%
\begin{proposition}\label{control_MISE_f}
Under Assumptions \ref{conditions_Kernel} and \ref{Nikolski_condition}, there exists a constant $c > 0$, such that 
\begin{displaymath}
\int_{-\infty}^{\infty}
\mathbb E(|\widehat f_{n,\mathbf h_n}(x) - f(x)|^2)dx
\leqslant
\frac{c}{n^2}\left(\left|\sum_{k = 1}^{n}\frac{h_{k}^{\beta}}{(l - 1)!}\right|^2 +
\sum_{k = 1}^{n}\frac{1}{h_k}\right),
\end{displaymath}
where $c$ depends on $\beta, L, \|K\|_2$ and $C_\beta(K)$ (but not on $n$ and $h_1,\dots,h_n$). 
\end{proposition}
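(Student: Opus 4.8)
The plan is to mimic the usual bias–variance decomposition of the MISE, but keeping track of the fact that $\widehat f_{n,\mathbf h_n}$ is an average of $n$ estimators with different bandwidths. Write $\widehat f_{n,\mathbf h_n}(x)-f(x) = \big(\widehat f_{n,\mathbf h_n}(x)-\mathbb E\widehat f_{n,\mathbf h_n}(x)\big) + \big(\mathbb E\widehat f_{n,\mathbf h_n}(x)-f(x)\big)$, so that after integration in $x$ the MISE splits into an integrated variance term and an integrated squared bias term. Since $\mathbb E\widehat f_{n,\mathbf h_n}(x) = \frac1n\sum_{k=1}^n (K_{h_k}\ast f)(x)$, the bias is $\frac1n\sum_{k=1}^n\big((K_{h_k}\ast f)(x)-f(x)\big)$.

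First I would treat the bias. For a single bandwidth $h$, the standard Nikol'ski-class argument (Taylor expansion of $f$ to order $l-1$ with integral remainder, using the moment-cancellation conditions from Assumption \ref{conditions_Kernel}, then Minkowski's integral inequality) gives $\|K_h\ast f - f\|_2 \le \frac{C_\beta(K)}{(l-1)!}\,L\,h^{\beta}$; this is exactly the bound in Tsybakov, Chapter 1, and it explains the $(l-1)!$ in the denominator. Then by the triangle inequality in $L^2$,
\[
\Big\|\tfrac1n\sum_{k=1}^n (K_{h_k}\ast f - f)\Big\|_2
\le \frac1n\sum_{k=1}^n \|K_{h_k}\ast f - f\|_2
\le \frac{L\,C_\beta(K)}{n\,(l-1)!}\sum_{k=1}^n h_k^{\beta},
\]
which after squaring is the first term in the claimed bound (up to the constant $c$).

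Next I would handle the variance. Since the $X_k$ are independent, $\mathrm{Var}(\widehat f_{n,\mathbf h_n}(x)) = \frac1{n^2}\sum_{k=1}^n \mathrm{Var}\big(K_{h_k}(X_k-x)\big) \le \frac1{n^2}\sum_{k=1}^n \mathbb E\big(K_{h_k}(X_k-x)^2\big)$. Integrating in $x$, using Fubini and the substitution $u=(y-x)/h_k$, one gets $\int_{\mathbb R}\mathbb E\big(K_{h_k}(X_k-x)^2\big)dx = \frac{1}{h_k}\|K\|_2^2 \int_{\mathbb R} f(y)dy = \frac{\|K\|_2^2}{h_k}$. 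Summing over $k$ yields the integrated variance $\le \frac{\|K\|_2^2}{n^2}\sum_{k=1}^n \frac1{h_k}$, the second term. Combining the two terms with a single constant $c$ depending on $\beta,L,\|K\|_2,C_\beta(K)$ finishes the proof.

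The only genuinely delicate point is the single-bandwidth bias estimate in $L^2$ norm under the Nikol'ski (rather than Hölder) condition: one must expand $f(x-hu)$ around $x$ to order $l-1$, write the remainder as $\frac{h^{l-1}}{(l-2)!}\int_0^1 (1-t)^{l-2}\big(f^{(l-1)}(x-thu)-f^{(l-1)}(x)\big)u^{l-1}dt$ (after using $\int u^i K(u)du=0$ for $i\le l$ to subtract off the lower-order terms), and then control its $L^2(dx)$ norm via Minkowski's integral inequality together with the Nikol'ski bound $\|f^{(l-1)}(\cdot - s)-f^{(l-1)}(\cdot)\|_2 \le L|s|^{\beta-(l-1)}$. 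Everything else is a routine variance computation and two applications of the triangle inequality; the recursive/averaged structure of the WW estimator only intervenes through those triangle inequalities and the independence of the summands.
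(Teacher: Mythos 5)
Your overall architecture coincides with the paper's proof: the integrated bias--variance decomposition, the variance computation via Fubini giving $\|K\|_2^2\,n^{-2}\sum_{k}h_k^{-1}$, and the reduction of the $n$-bandwidth bias to single-bandwidth bounds by the triangle inequality in $\mathbb L^2$ (this is exactly what the paper's Lemma \ref{generalized_Minkowski}.(2) accomplishes). The one place where your argument would fail as written is precisely the step you flag as delicate. You expand $f$ only to order $l-1$ and propose to bound the remainder through $\|f^{(l-1)}(\cdot-s)-f^{(l-1)}(\cdot)\|_2\leqslant L|s|^{\beta-(l-1)}$. That is not Assumption \ref{Nikolski_condition}: the Nikol'ski class $\mathcal H(\beta,L)$ controls the $\mathbb L^2$-increments of $f^{(\ell)}$ with $\ell=\lfloor\beta\rfloor=l$ and exponent $\beta-l\in[0,1)$, not those of $f^{(l-1)}$ with exponent $\beta-l+1\geqslant 1$. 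The latter condition is neither assumed nor implied (an $\mathbb L^2$-H\"older condition of exponent larger than $1$, valid for all shifts, essentially forces the next derivative to vanish); from the assumption one only gets $\|f^{(l-1)}(\cdot+t)-f^{(l-1)}(\cdot)-t f^{(l)}(\cdot)\|_2\leqslant L|t|^{\beta-l+1}/(\beta-l+1)$, i.e.\ the centering term $tf^{(l)}(x)$ cannot be dropped.

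The fix is the paper's (and Tsybakov's) route, one order higher: write $f(x+h_kz)-f(x)=\sum_{i=1}^{l-1}\frac{(h_kz)^i}{i!}f^{(i)}(x)+\frac{(h_kz)^l}{(l-1)!}\int_0^1(1-\tau)^{l-1}f^{(l)}(x+\tau h_kz)\,d\tau$; the vanishing moments of order $1,\dots,l-1$ kill the polynomial part, and since $\int z^lK(z)\,dz=0$ as well (Assumption \ref{conditions_Kernel} includes $i=l$) you may replace $f^{(l)}(x+\tau h_kz)$ by $f^{(l)}(x+\tau h_kz)-f^{(l)}(x)$ inside the remainder. Then Minkowski's integral inequality (Lemma \ref{generalized_Minkowski}.(1)) and the assumed bound $\|f^{(l)}(\cdot+t)-f^{(l)}(\cdot)\|_2\leqslant L|t|^{\beta-l}$, together with $\int_0^1(1-\tau)^{l-1}\tau^{\beta-l}\,d\tau\leqslant 1$, give $\|K_h\ast f-f\|_2\leqslant L\,C_\beta(K)\,h^{\beta}/(l-1)!$, which is exactly the single-bandwidth estimate you quoted from Tsybakov. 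With that correction your proof is complete and is essentially the paper's proof.
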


\noindent {\bf Remark.} Assumptions \ref{conditions_Kernel}, \ref{Holder_condition} and \ref{Nikolski_condition} are standard for density estimation, see Tsybakov~\cite{TSYBAKOV09}. Moreover, if we set $h_k=h$, we recover the results stated in Section 1.2.1 for Proposition \ref{control_MSE_f} and in Theorem 1.3 for Proposition \ref{control_MISE_f} in Tsybakov~(\cite{TSYBAKOV09} (that is a squared bias term of order $h^{2\beta}$ and a variance term of order $1/(nh)$).
\\

\noindent
The estimator is consistent if the risk tends to zero when $n$ grows to infinity, that is if $(1/n^2)
\left|\sum_{k=1}^{n}h_k^\beta\right|^2$ and $(1/n^2)
\sum_{k=1}^{n} \frac 1{h_k}$ tend to 0 when $n$ tends to infinity.

%satisfy
%%
%\begin{displaymath}
%\mathbb B_n\xrightarrow[n\rightarrow\infty]{}0
%\quad\mbox{ and }\quad
%\mathbb V_n\xrightarrow[n\rightarrow\infty]{}0.
%\end{displaymath}
%
\noindent Let us consider
\begin{displaymath}
h_k = k^{-\gamma}
\textrm{ $;$ }
k\in \{1, \dots, n\}
\end{displaymath}
and, for this collection of bandwidths, set 
\begin{equation}\label{BV}
\mathbb B_n(\gamma):=\frac{1}{n^2}
\left|\sum_{k=1}^{n}h_k^\beta\right|^2
\quad\mbox{ and }\quad
\mathbb V_n(\gamma):=\frac{1}{n^2}
\sum_{k=1}^{n} \frac 1{h_k} 
\end{equation}
with $\gamma\in (0,1)$ (otherwise $\mathbb B_n(\gamma)$ or $\mathbb V_n(\gamma)$ cannot tend to zero). Then
\begin{equation}\label{bgamma}
\mathbb B_n(\gamma) =
O\left(\frac{1}{n^2}\right) \mbox{ if } \gamma\beta > 1
\quad\mbox{ and }\quad
\mathbb B_n(\gamma) =
O\left(\frac{1}{n^{2\gamma\beta}}\right) \mbox{ if } \gamma\beta < 1
\end{equation}
with the intermediate case
\begin{displaymath}
\mathbb B_n(\gamma) = O\left(\frac{\log(n)}{n^2}\right) \mbox{ if } \gamma\beta=1.
\end{displaymath}
 Indeed, if $\gamma\beta < 1$, then
\begin{displaymath}
\mathbb B_n(\gamma) =
\left|\frac{1}{n}\sum_{k = 1}^{n}k^{-\gamma\beta}\right|^2 =
n^{-2\gamma\beta}\left|\frac{1}{n}\sum_{k = 1}^{n}\left(\frac{k}{n}\right)^{-\gamma\beta}\right|^2\\
\sim\frac{n^{-2\gamma\beta}}{(1 -\gamma\beta)^2}.
\end{displaymath}
 On the other hand,
\begin{equation}\label{vgamma}
\mathbb V_n(\gamma) = O(n^{\gamma-1}).
\end{equation}
As a consequence, we have the following result:
%

% Corollary : Optimal rate.

%
\begin{corollary}\label{optimal_rate}
Under Assumptions \ref{conditions_Kernel} and \ref{Nikolski_condition}, choosing
\begin{displaymath}
h_k = k^{-\gamma}
\textrm{ $;$ }
k\in\{0, 1, \dots, n\}, \quad \mbox{ with } \quad \gamma=\frac 1{2\beta +1}
\end{displaymath}
yields the rate
\begin{displaymath}
\sup_{\{f\in {\mathcal H}(\beta,L), f\geqslant 0, \int f=1\}}  \int_{-\infty}^{\infty}
\mathbb E(|\widehat f_{n,\mathbf h_n}(x) - f(x)|^2)dx
\leqslant cn^{-\frac{2\beta}{2\beta +1}},
\end{displaymath}
where $c=c(\beta,L,K)$ is a positive constant depending on $L,\beta$ and the kernel $K$, but not on $n$. 
\end{corollary}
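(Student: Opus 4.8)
The plan is to combine the MISE bound from Proposition~\ref{control_MISE_f} with the order estimates for $\mathbb{B}_n(\gamma)$ and $\mathbb{V}_n(\gamma)$ already established in \eqref{bgamma}--\eqref{vgamma}, and then optimize the resulting deterministic bound over the single parameter $\gamma \in (0,1)$. First, observe that with $h_k = k^{-\gamma}$ the quantity $\frac{1}{n^2}\left|\sum_{k=1}^n h_k^\beta\right|^2$ appearing in Proposition~\ref{control_MISE_f} is exactly $\mathbb{B}_n(\gamma)$ up to the constant factor $1/((l-1)!)^2$, and $\frac{1}{n^2}\sum_{k=1}^n 1/h_k$ is exactly $\mathbb{V}_n(\gamma)$; hence Proposition~\ref{control_MISE_f} yields, for a constant $c$ depending only on $\beta, L, \|K\|_2, C_\beta(K)$,
\begin{displaymath}
\int_{-\infty}^{\infty}\mathbb{E}(|\widehat f_{n,\mathbf h_n}(x) - f(x)|^2)dx \leqslant c\left(\mathbb{B}_n(\gamma) + \mathbb{V}_n(\gamma)\right),
\end{displaymath}
and this bound is uniform over $f \in \mathcal{H}(\beta,L)$ with $f \geqslant 0$, $\int f = 1$, since the constant in Proposition~\ref{control_MISE_f} does not depend on $f$ beyond its membership in the Nikol'ski ball.

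Next I would plug in the proposed choice $\gamma = 1/(2\beta+1)$. With this choice one has $\gamma\beta = \beta/(2\beta+1) < 1/2 < 1$, so the relevant branch of \eqref{bgamma} gives $\mathbb{B}_n(\gamma) = O(n^{-2\gamma\beta}) = O(n^{-2\beta/(2\beta+1)})$; and \eqref{vgamma} gives $\mathbb{V}_n(\gamma) = O(n^{\gamma - 1}) = O(n^{(1 - (2\beta+1))/(2\beta+1)}) = O(n^{-2\beta/(2\beta+1)})$. The two terms are therefore of the same order $n^{-2\beta/(2\beta+1)}$, which is precisely the value of $\gamma$ that balances the bias and variance contributions (setting $-2\gamma\beta = \gamma - 1$ and solving gives $\gamma = 1/(2\beta+1)$). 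Summing the two $O(\cdot)$ estimates and absorbing the finitely many multiplicative constants into a single $c = c(\beta, L, K)$ gives the claimed bound $c\, n^{-2\beta/(2\beta+1)}$, and taking the supremum over the Nikol'ski ball is harmless by the uniformity noted above.

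The argument is essentially bookkeeping: the substantive work has already been done in Proposition~\ref{control_MISE_f} and in the derivation of \eqref{bgamma} and \eqref{vgamma}. The only mild subtlety worth spelling out is the justification of the asymptotic equivalence $\frac{1}{n}\sum_{k=1}^n (k/n)^{-\gamma\beta} \to \int_0^1 t^{-\gamma\beta}dt = 1/(1-\gamma\beta)$ used in \eqref{bgamma}: since $\gamma\beta < 1$ the function $t \mapsto t^{-\gamma\beta}$ is integrable on $(0,1]$, and the left side is a Riemann sum that converges to the integral (a short monotonicity/dominated-convergence argument handles the singularity at $0$); this is where one must be slightly careful, but it presents no real obstacle. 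One should also note the index set is written $\{0,1,\dots,n\}$ in the statement whereas the sums run from $k=1$; the $k=0$ term plays no role (or is conventionally omitted), so this is only a cosmetic point. Assembling these pieces completes the proof.
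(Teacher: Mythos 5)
Your proposal is correct and takes essentially the same route as the paper: both rest on Proposition~\ref{control_MISE_f} together with the order estimates \eqref{bgamma} and \eqref{vgamma}, and amount to balancing the bias term $n^{-2\gamma\beta}$ against the variance term $n^{\gamma-1}$. The only difference is presentational: you substitute $\gamma=1/(2\beta+1)$ directly and check that both terms are $O(n^{-2\beta/(2\beta+1)})$ (also noting, correctly, that the constant in Proposition~\ref{control_MISE_f} is uniform over the Nikol'ski ball), whereas the paper locates this value of $\gamma$ by differentiating $\varphi_n(\gamma)=n^{-2\gamma\beta}+n^{\gamma-1}$ and observing that the minimizer is asymptotically $1/(2\beta+1)$.
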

\noindent
Clearly, this is the optimal rate in the minimax sense, see Goldenshluger and Lepski \cite{GL11} and the references therein. The bounds are uniform on the set of densities belonging to the ball ${\mathcal H}( \beta,L)$.
%

% Proof.

%
\begin{proof}
Consider
\begin{displaymath}
\varphi_n(\gamma) :=
n^{-2\gamma\beta} +
n^{\gamma - 1}.
\end{displaymath}
Then,
\begin{displaymath}
\frac{\partial\varphi_n(\gamma)}{\partial\gamma} =
\log(n)(-2\beta e^{-2\gamma\beta\log(n)} + e^{(\gamma - 1)\log(n)}).
\end{displaymath}
Moreover, $\partial_{\gamma}\varphi_n(\gamma) = 0$ if and only if,
\begin{displaymath}
\gamma =
\frac{1}{2\beta + 1} +\frac{\log(2\beta)}{\log(n)(1 + 2\beta)}
\sim\frac{1}{2\beta + 1}.
\end{displaymath}
Therefore, $\gamma = 1/(2\beta + 1)$ makes the upper bound on the risk minimal.
\end{proof}
%

% Section : Goldenshluger-Lepski's method for Wolverton-Wagner's estimator.

%
\section{Goldenshluger-Lepski's method for Wolverton-Wagner's estimator}\label{GL}
This section provides an extension of the well-known Goldenshluger-Lepski's bandwidth selection method for Parzen-Rosenblatt's estimator to Wolverton-Wagner's estimator.
\\
\\
Throughout this section, assume that
\begin{displaymath}
h_k = h_k(\gamma)
\textrm{ $;$ }
\forall k\in \{1, \dots, n\},
\end{displaymath}
where $\gamma\in [0,1]$ and the maps $h_1(.),\dots,h_n(.)$ from $[0,1]$ into $(0,\infty)$ fulfill the following assumption.
%

% Assumption : Assumption on the maps h_1,\dots,h_n.

%
\begin{assumption}\label{assumption_maps_h}
For every $\gamma'\in [0,1]$,
\begin{displaymath}
0 < h_n(\gamma') <\dots < h_1(\gamma').
\end{displaymath}
Moreover, $h_n(.)$ is decreasing and one to one from $[0,1]$ into $(0,1]$.
\end{assumption}
\noindent
For instance, one can take as above $h_k(\gamma') := k^{-\gamma'}$ for every $k\in\{1, \dots, n\}$ and $\gamma'\in [0,1]$.
\\
\\
Consider
\begin{displaymath}
\mathbf h_n(\gamma) := (h_1(\gamma),\dots,h_n(\gamma))
\end{displaymath}
and the set $\Gamma_n :=\{\gamma_1,\dots,\gamma_{N(n)}\}\subset [0,1]$, where $N(n)\in \{1, \dots, n\}$ and
\begin{displaymath}
0 <\gamma_1 <\dots <\gamma_{N(n)}\leqslant h_{n}^{-1}(1/n).
\end{displaymath}
Consider also
\begin{displaymath}
\widehat f_{n,\gamma,\gamma'}(x) :=
\frac{1}{n}\sum_{k = 1}^{n}
(K_{h_k(\gamma')}\ast K_{h_k(\gamma)})(X_k - x),
\end{displaymath}
where $\gamma'\in [0,1]$.
\\
\\
A way to extend the Goldenshluger-Lepski bandwidth selection method to Wolverton-Wagner's estimator is to solve the minimization problem
\begin{equation}\label{Goldenshluger_Lepski_maximization_problem}
\min_{\gamma\in\Gamma_n}
(A_n(\gamma) + V_n(\gamma)),
\end{equation}
where
\begin{displaymath}
A_n(\gamma) :=\sup_{\gamma'\in\Gamma_n}(\|
\widehat f_{n,\mathbf h_n(\gamma')} -
\widehat f_{n,\gamma,\gamma'}\|_{2}^{2} - V_n(\gamma'))_+, \quad 
V_n(\gamma') :=\upsilon\frac{\|K\|_2^2\|K\|_1^2}{n \, \mathfrak h_n(\gamma')}
\end{displaymath}
with $\upsilon > 0$ not depending on $n$ and
\begin{displaymath}
\frac 1{\mathfrak h_n(\gamma')} :=
\frac{1}{n}\sum_{k = 1}^{n}\frac{1}{h_k(\gamma')}.
\end{displaymath}
In the sequel, the map $\mathfrak h_n(.)$ fulfills the following assumption.
%

% Assumption : Assumption on Gamma_n.

%
\begin{assumption}\label{assumption_Gamma_n}
For every $c > 0$ and $r\in\{1/2,1\}$,
\begin{displaymath}
\sup_{n\in\mathbb N^*}
\sum_{\gamma'\in\Gamma_n}
\exp(-c/\mathfrak h_n(\gamma')^r)
<\infty.
\end{displaymath}
\end{assumption}
\noindent
\textbf{Example.} Consider
\begin{displaymath}
h_k(\gamma') =
k^{-\gamma'}
\textrm{ $;$ }
\forall k\in \{1, \dots, n\}
\textrm{, }
\forall\gamma'\in [0,1]
\end{displaymath}
and
\begin{equation}\label{GammaEx}
\Gamma_n =\left\{\left(
\frac{i}{[\log(n)]}\right)^{1/2}\textrm{ $;$ }i\in \{1, \dots, [\log(n)]\} \right\},
\end{equation}
where $[x]$ denotes the interger part of $x$. 
For every $\gamma'\in\Gamma_n$,
\begin{eqnarray*}
\frac 1{\mathfrak h_n(\gamma')} & = &
 \frac{1}{n}\sum_{k = 1}^{n}\frac{1}{h_k(\gamma')}
 = n^{\gamma' - 1}\sum_{k = 1}^{n}\left(\frac{k}{n}\right)^{\gamma'}\\
 & \geqslant & n^{\gamma' - 2}\sum_{k = 1}^{n}k
 \geqslant\frac{n^{\gamma'}}{2}
 \geqslant
 \frac{1}{2}\exp(\log(n)^{1/2}).
\end{eqnarray*}
Then, for any $c > 0$ and $r\in\{1/2,1\}$,
\begin{displaymath}
\sup_{n\in\mathbb N^*}
\sum_{\gamma'\in\Gamma_n}
\exp(-c/\mathfrak h_n(\gamma')^r)
\leqslant
\sup_{n\in\mathbb N^*}
\log(n)\exp\left(-\frac{c}{2^r}\exp(r\log(n)^{1/2})\right)
<\infty.
\end{displaymath}
%

% Proposition : Goldenshluger-Lepski.

%
\begin{proposition}\label{Goldenshluger_Lepski}
Under Assumptions \ref{assumption_maps_h} and \ref{assumption_Gamma_n}, if $f$ is bounded and $\widehat\gamma_n$ is a solution of the minimization problem (\ref{Goldenshluger_Lepski_maximization_problem}), then there exists a constant $\upsilon_0$ such that, for $\upsilon\geqslant \upsilon_0$, 
\begin{displaymath}
\mathbb E(\|\widehat f_{n,\mathbf h_n(\widehat\gamma_n)} - f\|_{2}^{2})
\leqslant
c_0
\inf_{\gamma\in\Gamma_n}
\left(
V_n(\gamma) +
\frac{1}{n^2}\left|\sum_{k = 1}^{n}\|f - K_{h_k(\gamma)}\ast f\|_2\right|^2 \right)+
\frac{c_1}{n},
\end{displaymath}
where $c_0$ is a numerical constant ($c_0=18$ suits) and $c_1$ is a constant depending on $K$ and $\|f\|_\infty$ (but not on $n$). \\
If in addition Assumptions \ref{conditions_Kernel} and \ref{Nikolski_condition} hold, then
\begin{equation}\label{GLresult}
\mathbb E(\|\widehat f_{n,\mathbf h_n(\widehat\gamma_n)} - f\|_{2}^{2})
\leqslant
c\left\{
\inf_{\gamma\in\Gamma_n}
\left({\mathbb B}_n(\gamma)+ {\mathbb V}_n(\gamma)\right) +\frac 1n\right\}
\end{equation}
where ${\mathbb B}_n(\gamma)$ and ${\mathbb V}_n(\gamma)$ are defined in (\ref{BV}), (\ref{bgamma}) and (\ref{vgamma}).
\end{proposition}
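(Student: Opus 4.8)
The plan is to follow the standard Goldenshluger--Lepski machinery, adapted to the recursive structure of the Wolverton--Wagner estimator. First I would set up notation for the auxiliary quantities: write $\widehat f_{n,\gamma,\gamma'} = \frac1n\sum_k (K_{h_k(\gamma')}\ast K_{h_k(\gamma)})(X_k-\cdot)$ and note that $\mathbb E(\widehat f_{n,\mathbf h_n(\gamma')}) = \frac1n\sum_k K_{h_k(\gamma')}\ast f =: f_{n,\gamma'}$ and $\mathbb E(\widehat f_{n,\gamma,\gamma'}) = K_{h_\cdot(\gamma)}\ast f_{n,\gamma'}$ in the appropriate averaged sense. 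The key deterministic inequality is the usual bias-type decomposition: for any fixed $\gamma,\gamma'\in\Gamma_n$,
\begin{displaymath}
\|\widehat f_{n,\mathbf h_n(\gamma')} - \widehat f_{n,\gamma,\gamma'}\|_2 \leqslant \|f_{n,\gamma'} - \text{(smoothed } f_{n,\gamma'})\|_2 + (\text{two centered stochastic terms}),
\end{displaymath}
where the stochastic terms are $\widehat f_{n,\mathbf h_n(\gamma')} - f_{n,\gamma'}$ and its $K_{h_\cdot(\gamma)}$-smoothed version; by Young's inequality the smoothed fluctuation is controlled by $\|K\|_1$ times the unsmoothed one. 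This lets me bound $A_n(\gamma)$ by a deterministic bias term plus a supremum over $\gamma'\in\Gamma_n$ of centered empirical process increments minus the penalty $V_n(\gamma')$.

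Next I would prove the concentration step, which I expect to be the technical heart. One needs, for each $\gamma'$, a Bernstein-type deviation bound for $\|\widehat f_{n,\mathbf h_n(\gamma')} - f_{n,\gamma'}\|_2^2$ around its expectation $\mathbb E\|\widehat f_{n,\mathbf h_n(\gamma')} - f_{n,\gamma'}\|_2^2 \lesssim \frac{\|K\|_2^2}{n\,\mathfrak h_n(\gamma')}$ (this order comes from Proposition \ref{control_MISE_f} applied with the variance term, using that $f$ is bounded). Writing the squared $L^2$ norm as a $U$- or $V$-statistic in the i.i.d. variables $X_1,\dots,X_n$ with kernel weights $1/h_k(\gamma')$, I would apply a Talagrand-type or Bernstein-type inequality (as in Lacour {\it et al.} \cite{LMR17} or Goldenshluger--Lepski \cite{GL11}); the relevant moment and sup-norm parameters scale like $1/\mathfrak h_n(\gamma')$ and $1/\mathfrak h_n(\gamma')^{1/2}$, which is precisely why Assumption \ref{assumption_Gamma_n} with $r\in\{1/2,1\}$ is needed: summing the resulting deviation probabilities $\exp(-c/\mathfrak h_n(\gamma')^r)$ over $\gamma'\in\Gamma_n$ gives a finite, $n$-independent bound, and after integrating the tail one gets a residual term of order $c_1/n$ with $c_1$ depending on $K$ and $\|f\|_\infty$. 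Choosing $\upsilon_0$ large enough makes $V_n(\gamma')$ dominate the fluctuation of $\|\widehat f_{n,\mathbf h_n(\gamma')}-f_{n,\gamma'}\|_2^2$ except on the small-probability event, so that $\mathbb E(A_n(\gamma)) \lesssim \frac1{n^2}\big|\sum_k \|f_{n,\gamma'}-(\text{smoothed})\|\big|^2 + c_1/n$ uniformly in $\gamma$; by the triangle inequality the bias term is itself bounded by $\frac1{n^2}\big|\sum_k\|f - K_{h_k(\gamma)}\ast f\|_2\big|^2$.

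Then I would assemble the oracle inequality. By definition of $\widehat\gamma_n$, for every $\gamma\in\Gamma_n$,
\begin{displaymath}
\|\widehat f_{n,\mathbf h_n(\widehat\gamma_n)} - f\|_2^2 \leqslant 2\|\widehat f_{n,\mathbf h_n(\widehat\gamma_n)} - \widehat f_{n,\gamma,\widehat\gamma_n}\|_2^2 + 2\|\widehat f_{n,\gamma,\widehat\gamma_n} - f\|_2^2,
\end{displaymath}
and the first term is $\leqslant 2(A_n(\gamma) + V_n(\widehat\gamma_n))$ while, by the same bias/variance split and the minimality $A_n(\widehat\gamma_n)+V_n(\widehat\gamma_n)\leqslant A_n(\gamma)+V_n(\gamma)$, the second term is controlled by $A_n(\widehat\gamma_n)$ plus the bias at $\gamma$ plus a fluctuation term absorbed as above; a short bookkeeping with the constant yields $c_0=18$. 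Taking expectations, using the concentration bound for $\mathbb E(A_n(\gamma))$, and then taking the infimum over $\gamma\in\Gamma_n$ gives the first displayed inequality. For the second claim, under Assumptions \ref{conditions_Kernel} and \ref{Nikolski_condition}, Proposition \ref{control_MISE_f} (its bias part) gives $\|f - K_{h_k(\gamma)}\ast f\|_2 \leqslant c\, h_k(\gamma)^\beta$, so $\frac1{n^2}\big|\sum_k\|f-K_{h_k(\gamma)}\ast f\|_2\big|^2 \leqslant c\,\mathbb B_n(\gamma)$, while $V_n(\gamma) = c\,\mathbb V_n(\gamma)$ by the definition of $\mathfrak h_n(\gamma)$; substituting gives \eqref{GLresult}. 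The main obstacle is the concentration step for the quadratic functional of the independent but non-identically-weighted sum, and verifying that the variance/sup-norm parameters there are exactly compatible with the two exponents $r=1/2$ and $r=1$ appearing in Assumption \ref{assumption_Gamma_n}.
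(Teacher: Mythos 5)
Your plan is correct and follows essentially the same route as the paper's proof: the Goldenshluger--Lepski decomposition of the risk through $A_n(\gamma)+V_n(\gamma)$ and the minimality of $\widehat\gamma_n$, a generalized Minkowski/Young bound giving the bias term $\frac{\|K\|_1^2}{n^2}\bigl|\sum_k\|f-K_{h_k(\gamma)}\ast f\|_2\bigr|^2$, a Talagrand-type concentration bound for the centered (non-identically weighted) empirical terms whose deviation exponents in $1/\mathfrak h_n(\gamma')$ and $1/\mathfrak h_n(\gamma')^{1/2}$ are exactly what Assumption \ref{assumption_Gamma_n} is designed to sum, yielding the residual $c_1/n$, and finally the Nikol'ski bias bound $\|f-K_{h_k(\gamma)}\ast f\|_2\leqslant c\,h_k(\gamma)^{\beta}$ together with $V_n(\gamma)\propto\mathbb V_n(\gamma)$ to get (\ref{GLresult}). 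The only cosmetic differences (a $2+2$ triangle split through $\widehat f_{n,\gamma,\widehat\gamma_n}$ instead of the paper's three-term split, and controlling the smoothed fluctuation via Young rather than a second application of Talagrand's inequality) do not change the argument.
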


\noindent Note that the proof leads to the value $\kappa_0=24$, which would be too large in practice. \\

\noindent
\textbf{Remark.} By Corollary \ref{optimal_rate}, the infimum in bound (\ref{GLresult}) has the order of the optimal rate, and is reached automatically by the data driven estimator. This result is more precise than the heuristics associated with cross-validation. 
\\
We mentioned previously that the optimal theoretical choice for $\gamma$ under Assumptions \ref{conditions_Kernel} and \ref{Nikolski_condition} is $\gamma=1/(2\beta+1)$. Here, the selected $\gamma$ should be at nearest of this value, e.g. if $\Gamma_n$ is as in (\ref{GammaEx}), distant from less than $1/\sqrt{\log(n)}$ of the good choice. We may therefore consider that $\widehat\gamma_n$ provides an estimate of $1/(2\beta+1)$ and thus an estimate of the regularity $\beta$ of $f$ (at least for huge values of $n$).
%

% Section : The LMR operational version of GL-method.

%
\section{The Lacour-Massart-Rivoirard  (LMR) estimator}\label{LMR}
\subsection{Estimator and main result}
The Goldenshluger-Lepski method has been acknowledged as being difficult to implement, due to the square grid in $\gamma,\gamma'$ required to compute intermediate versions of the criterion and to the lack of intuition in the choice of the constant $\upsilon$ which should be calibrated from preliminary simulation experiments. This is the reason why Lacour {\it et al.} \cite{LMR17} investigated and proposed a simplified criterion relying on deviation inequalities for $U$-statistics due to Houdr\'e and Reynaud-Bouret \cite{HRB03}. This inequality applies in our more complicated context and Lacour-Massart-Rivoirard's result can be extended here as follows. 
\\
\\
Let us recall that $K_{\varepsilon}(\cdot) := (1/\varepsilon) K(\cdot /\varepsilon)$ for every $\varepsilon > 0$ and set
\begin{displaymath}
f_{n,\gamma}(x) :=
\mathbb E(\widehat f_{\mathbf h_n(\gamma)}(x)) =
\frac{1}{n}
\sum_{k = 1}^{n}
(K_{h_k(\gamma)}\ast f)(x).
\end{displaymath}
Let $\gamma_{\max}$ be the maximal proposal in $\Gamma_n$ and consider
\begin{displaymath}
\textrm{Crit}(\gamma) :=
\|\widehat f_{n,\mathbf h_n(\gamma)}-\widehat f_{n,\mathbf h_n(\gamma_{\max})}\|_{2}^{2} +\textrm{pen}(\gamma)
\end{displaymath}
with
\begin{displaymath}
\textrm{pen}(\gamma) :=
\frac{2}{n^2}
\sum_{k = 1}^{n}
\langle K_{h_k(\gamma_{\max})},K_{h_k(\gamma)}\rangle_2.
\end{displaymath}
Then, we define
\begin{displaymath}
\widetilde\gamma_n\in
\arg\min_{\gamma\in\Gamma_n}
\textrm{Crit}(\gamma).
\end{displaymath}
In the sequel, $K$, $f$ and $\mathbf h_n$ fulfill the following assumption.
%

% Assumption : Conditions for LMR's bound.

%
\begin{assumption}\label{conditions_LMR}
The kernel $K$ is symmetric, $K(0) > 0$,
\begin{displaymath}
\int_{-\infty}^{\infty}K(y)dy = 1
\mbox{, }\quad
\frac{\|K\|_{\infty}\|K\|_1}{nh_n(\gamma_{\max})}\leqslant 1
\end{displaymath}
and $\|f\|_{\infty} <\infty$.
\end{assumption}
%

% Proposition : LMR method.

%
\begin{proposition}\label{LMR_method}
Consider $\lambda\in [1,\infty[$ and $\varepsilon\in (0,1)$. Under Assumption \ref{conditions_LMR}, there exists three deterministic constants $c_1,c_2,c_3 > 0$, not depending on $n$, $\lambda$ and $\gamma$, such that with probability larger than $1 - c_1|\Gamma_n|e^{-\lambda}$,
\begin{eqnarray*}
 \| \widehat f_{n,\mathbf h_n(\widetilde\gamma_n)}-f\|_{2}^{2} &\leqslant &
 (1+\varepsilon)\min_{\gamma\in \Gamma_n}\|\widehat f_{n,\mathbf h_n(\gamma)}-f\|_{2}^{2}\\
 & & + \frac{c_2}{\varepsilon}\| f_{n,\gamma_{{\rm max}}} - f\|_{2}^{2}
 + \frac{c_3}{\varepsilon} \left(\frac{\lambda^2}n  + \frac{\lambda^3}{n^2h_n(\gamma_{\max})}\right).
\end{eqnarray*}
\end{proposition}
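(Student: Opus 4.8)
The plan is to follow the Lacour-Massart-Rivoirard strategy, adapting their oracle inequality for the Goldenshluger-Lepski-type estimator to the recursive Wolverton-Wagner setting. First I would introduce the auxiliary bias term $f_{n,\gamma}$ and decompose, for any fixed $\gamma\in\Gamma_n$,
\begin{displaymath}
\|\widehat f_{n,\mathbf h_n(\widetilde\gamma_n)}-f\|_2^2
\leqslant (1+\varepsilon_0)\|\widehat f_{n,\mathbf h_n(\widetilde\gamma_n)}-\widehat f_{n,\mathbf h_n(\gamma_{\max})}\|_2^2
+ (1+\varepsilon_0^{-1})\|\widehat f_{n,\mathbf h_n(\gamma_{\max})}-f\|_2^2,
\end{displaymath}
and dually control $\|\widehat f_{n,\mathbf h_n(\gamma_{\max})}-f\|_2^2$ itself. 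The key algebraic identity, exactly as in \cite{LMR17}, is that the difference $\|\widehat f_{n,\mathbf h_n(\gamma)}-\widehat f_{n,\mathbf h_n(\gamma_{\max})}\|_2^2 - \mathrm{pen}(\gamma)$ expands into a sum of a deterministic (bias-type) term, a centered linear empirical term, and a degenerate $U$-statistic of order two. The penalty $\mathrm{pen}(\gamma)=\frac{2}{n^2}\sum_k\langle K_{h_k(\gamma_{\max})},K_{h_k(\gamma)}\rangle$ is precisely designed to cancel the diagonal of the $U$-statistic.

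Next I would use the definition of $\widetilde\gamma_n$ as the minimizer of $\mathrm{Crit}$ to write $\mathrm{Crit}(\widetilde\gamma_n)\leqslant\mathrm{Crit}(\gamma)$ for every $\gamma\in\Gamma_n$, turn this into a bound on $\|\widehat f_{n,\mathbf h_n(\widetilde\gamma_n)}-f\|_2^2$ in terms of $\|\widehat f_{n,\mathbf h_n(\gamma)}-f\|_2^2$, the penalty, and the residual stochastic terms evaluated at $\widetilde\gamma_n$ (and at $\gamma$). To handle the stochastic terms uniformly over $\Gamma_n$, I would apply two concentration tools: Bernstein's inequality for the centered linear term, and the Houdr\'e--Reynaud-Bouret inequality \cite{HRB03} for the degenerate $U$-statistic. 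Each requires estimating four quantities (the sup-norm, the Hilbert-Schmidt-type norm, a mixed norm, and a weak-variance term) of the kernel
\begin{displaymath}
(x,y)\mapsto \frac{1}{n}\sum_{k=1}^n\big(K_{h_k(\gamma)}-K_{h_k(\gamma_{\max})}\big)(X_k-x)\big(K_{h_k(\gamma)}-K_{h_k(\gamma_{\max})}\big)(X_k-y);
\end{displaymath}
using $\|f\|_\infty<\infty$, Assumption \ref{conditions_LMR}, Young's inequality and the bound $h_n(\gamma)\geqslant h_n(\gamma_{\max})$ for $\gamma\leqslant\gamma_{\max}$, these are all controlled by constant multiples of $1/(nh_n(\gamma_{\max}))$ and $1/(n^2h_n(\gamma_{\max}))$ up to the bias scale. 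A union bound over $|\Gamma_n|$ proposals then yields the event of probability $\geqslant 1-c_1|\Gamma_n|e^{-\lambda}$ on which all these deviations hold simultaneously, producing the $\lambda^2/n + \lambda^3/(n^2 h_n(\gamma_{\max}))$ terms.

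The main obstacle is the bookkeeping of the $U$-statistic in the recursive context: unlike the Parzen-Rosenblatt case, the summands involve different bandwidths $h_k$ for each index $k$, so the kernel of the $U$-statistic is not a simple product and the four Houdr\'e--Reynaud-Bouret quantities must be bounded term by term in $k$ and then averaged. I expect the estimates themselves to go through because, after expansion, each term in $k$ has the same structure as in \cite{LMR17} with $h$ replaced by $h_k(\gamma)$, and the convexity/averaging only helps; the delicate point is verifying that the mixed sup-type norm of the symmetrized kernel does not pick up an extra factor, which is where the monotonicity $h_n(\gamma_{\max})\leqslant h_k(\gamma)$ and the normalization $\|K\|_\infty\|K\|_1/(nh_n(\gamma_{\max}))\leqslant 1$ are used. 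Finally, choosing $\varepsilon_0$ in terms of $\varepsilon$ and collecting constants gives the stated inequality with the oracle term $(1+\varepsilon)\min_{\gamma\in\Gamma_n}\|\widehat f_{n,\mathbf h_n(\gamma)}-f\|_2^2$, the bias correction $\frac{c_2}{\varepsilon}\|f_{n,\gamma_{\max}}-f\|_2^2$, and the remainder $\frac{c_3}{\varepsilon}\big(\lambda^2/n + \lambda^3/(n^2h_n(\gamma_{\max}))\big)$.
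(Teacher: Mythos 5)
Your overall route is the paper's route: exploit $\mathrm{Crit}(\widetilde\gamma_n)\leqslant\mathrm{Crit}(\gamma)$, expand the criterion so that the penalty cancels the diagonal term, and control the remaining centered linear terms by Bernstein and the degenerate second-order $U$-statistic (with index-dependent bandwidths $h_k$) by Houdr\'e--Reynaud-Bouret, followed by a union bound over $\Gamma_n$. However, there are two genuine gaps. First, your opening decomposition
\begin{displaymath}
\|\widehat f_{n,\mathbf h_n(\widetilde\gamma_n)}-f\|_2^2\leqslant(1+\varepsilon_0)\|\widehat f_{n,\mathbf h_n(\widetilde\gamma_n)}-\widehat f_{n,\mathbf h_n(\gamma_{\max})}\|_2^2+(1+\varepsilon_0^{-1})\|\widehat f_{n,\mathbf h_n(\gamma_{\max})}-f\|_2^2
\end{displaymath}
cannot be the starting point: $\|\widehat f_{n,\mathbf h_n(\gamma_{\max})}-f\|_2^2$ contains the variance of the overfitting estimator, of order $1/(n\,\mathfrak h_n(\gamma_{\max}))$, which is far larger than the remainder $\lambda^2/n+\lambda^3/(n^2h_n(\gamma_{\max}))$ allowed in the statement; only the bias $\|f_{n,\gamma_{\max}}-f\|_2^2$ may survive. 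In the paper this term disappears exactly in the algebra (Step 1 of the proof, where everything is rewritten through $\psi_n(\gamma)=\langle\widehat f_{n,\mathbf h_n(\gamma_{\max})}-f,\widehat f_{n,\mathbf h_n(\gamma)}-f\rangle_2$ and only $\psi_n$, the penalties and $\|\widehat f_{n,\mathbf h_n(\gamma)}-f\|_2^2$ remain), and the variance of $\widehat f_{n,\mathbf h_n(\gamma_{\max})}$ only enters through the $U$-statistic and linear terms where it is paid at price $\lambda^3/(n^2h_n(\gamma_{\max}))$.

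Second, and more importantly, your plan stops at concentration bounds whose right-hand sides involve the deterministic quantities $\|f_{n,\gamma}-f\|_2^2$ and $\|K\|_2^2/(n\,\mathfrak h_n(\gamma))$, evaluated both at the fixed $\gamma$ and at the random $\widetilde\gamma_n$. To obtain the stated inequality, these must be converted, uniformly over $\Gamma_n$, into the empirical loss $(1+\varepsilon)\|\widehat f_{n,\mathbf h_n(\gamma)}-f\|_2^2$ (so that the terms at $\widetilde\gamma_n$ can be absorbed, with a factor strictly less than one, into the left-hand side, and the terms at $\gamma$ produce the oracle $\min_{\gamma\in\Gamma_n}\|\widehat f_{n,\mathbf h_n(\gamma)}-f\|_2^2$). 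This is the content of the paper's Lemma \ref{Lerasle_type_inequality}, which is not a bookkeeping detail: it requires a separate concentration argument, namely another application of the Houdr\'e--Reynaud-Bouret inequality to $U_n(\gamma,\gamma)$ and a Bernstein bound for the additive term $W_n(\gamma)=\frac 1n\sum_k(Y_k(\gamma)-\mathbb E(Y_k(\gamma)))$ with $Y_k(\gamma)=\|K_{h_k(\gamma)}(X_k-\cdot)-f_{h_k(\gamma)}\|_2^2$. Without this step your argument yields an oracle-type bound in terms of deterministic bias/variance quantities, not the claimed bound relative to the random loss $\min_{\gamma\in\Gamma_n}\|\widehat f_{n,\mathbf h_n(\gamma)}-f\|_2^2$. (A minor point: the "kernel" you display for the $U$-statistic, a single sum over $k$ in $(x,y)$ with only $X_k$ appearing, is not well formed; the relevant object is the two-sample-index statistic $\sum_{k\neq l}\langle K_{h_k(\cdot)}(X_k-\cdot)-f_{h_k(\cdot)},K_{h_l(\cdot)}(X_l-\cdot)-f_{h_l(\cdot)}\rangle_2$, whose four Houdr\'e--Reynaud-Bouret constants must be bounded with the $k$-dependent bandwidths, as in the paper's Lemma \ref{U_statistic_bound}.)
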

\noindent
\textbf{Remark.} The term $\| f_{n,\gamma_{{\rm max}}} - f\|_{2}^{2}$ is negligible because it is a pure bias term for smallest bandwidth (e.g., under Assumption \ref{Nikolski_condition}, it has order $n^{-2\beta\gamma_{{\rm max}}}$, see (\ref{bgamma}), and thus $o(1/n)$ if $\gamma_{{\rm max}}$ is near of 1 and $\beta>1/2$). The terms following are of order $O(1/n)$ and are always negligible compared to nonparametric rates in our setting. Therefore, the bound given in Proposition \ref{LMR_method} says that the MISE of the adaptive estimator has the order of the best estimator of the collection, up to a multplicative factor larger than 1. This is the method we implement in the next section: it is faster than GL method and with no constant to calibrate in the penalty.
%

% Subsection : Simulation experiments.

%
\subsection{Simulation experiments}\label{simu}
We consider basic densities with different types and orders of regularity: \begin{itemize}
\item $X\rightsquigarrow\mathcal N(0,1)$, density $f_1$,
\item a mixed gaussian $X\rightsquigarrow 0.5\mathcal N(-2,1) + 0.5\mathcal N(2,1)$, density $f_{m,1}$,
\item $X\rightsquigarrow\beta(3,3)$, density $f_2$,
\item a mixed beta $X\rightsquigarrow 0.5(\beta(3,3) - 1)+ 0.5 \beta(3,3)$, density $f_{m,2}$,
\item $X\rightsquigarrow\gamma(5,5)/10$, density $f_3$,
\item a mixed gamma $X\rightsquigarrow 0.4.\gamma(2,1/3)+0.6 \gamma(7,6)/10$, density $f_{m,3}$,
\item $X\rightsquigarrow f_4$ with $f_4(x)=e^{-|x|}$, a Laplace density.
\end{itemize}
The densities $f_1$ and $f_{m,1}$ have infinite regularity, $f_2$ and $f_{m,2}$ should rather have regularity of order less than 2, $f_3$ and $f_{m,3}$ less than 4, and $f_4$ less than 1. This choice should allow to study the influence of the order of the kernel.
\\
\begin{table}
\begin{tabular}{cc|cccc|cccc|c}
&& \multicolumn{4}{|c|}{LMR for WW} & \multicolumn{4}{|c|}{ Original LMR } &  \\ 
&$n=$ & $K_1$ & $K_3$ & $K_5$ & $K_7$ & $K_1$ & $K_3$ & $K_5$ & $K_7$ & {\tt ks} \\
\hline
&&&&&&&&&&\\
$f_1$ & $250$ & 0.442 & 0.318 &  0.285  &  0.256 &  
							0.412 & 0.315 &  0.290  &  0.268 &   0.285\\
&  &       {\tiny (0.252)} & {\tiny (0.213)} &  {\tiny (0.193)} &  {\tiny (0.162)} &   
							{\tiny (0.241)} & {\tiny (0.214)} &  {\tiny (0.205)}  & 
							{\tiny (0.193)} &  {\tiny (0.174)} \\
& $1000$ &   0.144 &   0.091 & 0.080 &    0.075 &  0.133 & 0.088 & 0.079 & 0.076  &  0.101 \\
&&   {\tiny (0.079)}   & {\tiny (0.065)} &  {\tiny (0.061)} & {\tiny (0.059)} & {\tiny (0.076)}  & {\tiny (0.064)} & {\tiny (0.062)} &  {\tiny (0.061)}   & {\tiny (0.059)} \\
$f_{m,1}$ & $250$ &  0.400 &  0.316 &  0.287 &  0.255 & 0.387 &  0.327 &    0.291 &   0.256 &    1.115 \\
  && {\tiny (0.204)} & {\tiny (0.189)} & {\tiny (0.176)} & {\tiny (0.162)} & {\tiny (0.208)} & {\tiny (0.202)} & {\tiny (0.179)} & {\tiny (0.170)} &    {\tiny (0.150)} \\
& 1000 &   0.141 &  0.101 & 0.090 & 0.084 & 0.135 & 0.101 & 0.094 &  0.091 & 0.585 \\
&&  {\tiny (0.0623)} & {\tiny (0.051)} & {\tiny (0.049)} & {\tiny (0.046)} &{\tiny (0.062)} & {\tiny (0.053)} & {\tiny (0.051)} & {\tiny (0.050)} & {\tiny (0.076)} \\
$f_2$ & 250 &  3.586 & 2.141 & 1.840 & 1.709 & 1.865 & 1.343 & 1.221 & 1.178 & 1.272 \\
 &&  {\tiny (1.403)} &  {\tiny (1.230)} & {\tiny (1.155)} & {\tiny (1.116)} & {\tiny (1.108)} & {\tiny (0.930)} &  {\tiny (0.884)} & {\tiny (0.885)} &{\tiny (0.789)}\\
& 1000 &   1.056 &  0.646 &  0.555 & 0.515 & 0.602 & 0.429 & 0.382 & 0.372 &   0.506 \\
&&   {\tiny (0.394)} &  {\tiny (0.306)} &  {\tiny (0.283)} & {\tiny (0.270)} & {\tiny (0.312)} & {\tiny (0.270)} & {\tiny (0.250)} & {\tiny (0.235)} & {\tiny (0.282)} \\
$f_{m,2}$ & $250$ &   3.071 & 2.040 & 1.778 &  1.654 & 1.825 & 1.362 &  
1.217 &   1.157 &  8.912 \\
 &&  {\tiny (0.851)} & {\tiny (0.743)} & {\tiny (0.706)} & {\tiny (0.681)} &  {\tiny (0.655)} & {\tiny (0.584)} & {\tiny (0.605)} &  {\tiny (0.565)} & {\tiny (0.909)}\\
& $1000$ &    0.905 & 0.593 & 0.508 & 0.476 & 0.657 & 0.438 &  0.389 &     0.358 &   4.876\\
&&   {\tiny (0.246)} &  {\tiny (0.201)} &  {\tiny (0.187)} &  {\tiny (0.182)} &  {\tiny (0.257)} & {\tiny (0.188)} & {\tiny (0.159)} &  {\tiny (0.163)} & {\tiny (0.367)}\\
$f_3$ & $250$ &  0.449 & 0.358 & 0.340 &  0.326 & 0.419 &  0.356 &  0.343 & 0.327 &  0.298 \\
 &&  {\tiny (0.263)} & {\tiny (0.236)} & {\tiny (0.221)} & {\tiny (0.198)} & {\tiny (0.259)} & {\tiny (0.241)} & {\tiny (0.224)} & {\tiny (0.201)} & {\tiny (0.202)} \\
& $1000$ & 0.174 & 0.132 & 0.124 &  0.121 & 0.162 & 0.130 &  0.126 & 0.126 & 0.125 \\
&&  {\tiny (0.085)} & {\tiny (0.071)} & {\tiny (0.067)} & {\tiny (0.065)} &{\tiny (0.081)} &  {\tiny (0.071)} & {\tiny (0.071)} & {\tiny (0.076)} & {\tiny (0.065)} \\
$f_{m,3}$ & $250$ & 1.257 & 1.129 & 1.106 & 1.103 & 1.140 &  1.117 & 1.138 & 1.162 &  4.089 \\
&&   {\tiny (0.597)} & {\tiny (0.555)} & {\tiny (0.537)} & {\tiny (0.532)} & {\tiny (0.564)} & {\tiny (0.562)} & {\tiny (0.568)} & {\tiny (0.576)} & {\tiny (0.355)} \\
& $1000$ &    0.491 &  0.448 & 0.444 & 0.446 & 0.449 &  0.441 &  0.454 & 
 0.466 &  3.172 \\
&&   {\tiny (0.171)} & {\tiny (0.158)} & {\tiny (0.158)} & {\tiny (0.160)} & {\tiny (0.168)} & {\tiny (0.174)} & {\tiny (0.189)} & {\tiny (0.204)} & {\tiny (0.201)} \\
$f_4$ & $250$ &  0.683 & 0.642 & 0.642 & 0.649 & 0.663 & 0.680 & 0.706 &    0.708 &  0.519 \\
&&   {\tiny (0.353)} & {\tiny (0.318)} & {\tiny (0.301)} & {\tiny (0.294)} &  {\tiny (0.347)} & {\tiny (0.343)} & {\tiny (0.339)} & {\tiny (0.322)} & {\tiny (0.260)} \\
& $1000$ &   0.281 &   0.254 &  0.254 &  0.258 & 0.273 &  0.268 & 0.278 &  0.284 &  0.242 \\
&&  {\tiny (0.135)} & {\tiny (0.122)} & {\tiny (0.120)} & {\tiny (0.122)} & {\tiny (0.141)} & {\tiny (0.147)} & {\tiny (0.163)} & {\tiny (0.172)} & {\tiny (0.105)}\\
&&&&&&&&&&\\
\end{tabular}
\caption{100 $\times$ MISE with 100 $\times$ std in parenthesis, computed over 200 simulations.}\label{tab1}
\end{table}
\\
Denoting by $n_j(x)$ the density of a centered Gaussian random variable with variance equal to $j$, we consider the following kernels:
\begin{itemize}
\item a Gaussian kernel, $K_1(x)=e^{-x^2/2}/\sqrt{2\pi}$ which is of order 1, 
\item a Gaussian-type kernel of order 3, $K_3(x)=2n_1(x)-n_2(x)$, 
\item a Gaussian-type kernel of order 5, $K_5(x)= 3n_1(x)-3n_2(x)+ n_3(x)$, 
\item a Gaussian-type kernel of order 7, $K_7(x)=4n_1(x)-6n_2(x)+4n_3(x)-n_4(x)$.
\end{itemize}
With all these kernels, the penalty terms are computed analytically and without approximation. Indeed, for $n_{i,h}(x) = (1/h)n_i(x/h)$, it holds that
\begin{displaymath}
\langle n_{i,h_1}, n_ {j,h_2}\rangle_2= \int_{-\infty}^{\infty}
n_{i,h_1}(x) n_{j,h_2}(x)dx =
\frac{1}{\sqrt{2\pi}}\times\frac{1}{\sqrt{ih_{1}^{2} + jh_{2}^{2}}}.
\end{displaymath}
We compute the variable bandwidth estimator as described in Section \ref{LMR} and select $\widetilde\gamma_n$ in a collection of $M = 40$ equispaced values between 0 and 0.5 while the bandwidth associated with observation $i$ is $h_i(\gamma)=i^{-\gamma}$. We also compute the original estimator of Lacour {\it et al.} \cite{LMR17} with bandwidth $h$ which does not depend on the observation and is selected among $M = 40$ values in the set $\{k/M\textrm{ $;$ } k=1, \dots, M\}$. 
\\
\\
For comparison, we give the performance of the Matlab density estimator obtained from ${\tt ksdensity}$ function (denoted by {\tt ks} in Table \ref{tab1}), which entails a different bandwidth selection method and relies on a gaussian kernel.
\\
\\
We compute the integrated ${\mathbb L}^2$-risk associated with all the final estimators, evaluated at $P = 100$ equispaced points in the range $[a,b]$ of the observations, averaged over $R = 200$ repetitions:
\begin{displaymath}
\frac{1}{R}
\sum_{j = 1}^{R}
\frac{b - a}{P}
\sum_{\ell = 1}^{P}
(\widehat f^{(j)}_{\widehat h^{(j)}}(x_\ell) - f(x_\ell))^2
\mbox{, }\quad
x_\ell = a + \ell\frac{b - a}{P},
\end{displaymath}
where $\widehat f^{(j)}_{\widehat h^{(j)}}$ is the estimator computed for path $j$.
\begin{figure}
\begin{tabular}{c}
\includegraphics[width=12cm,height=5cm]{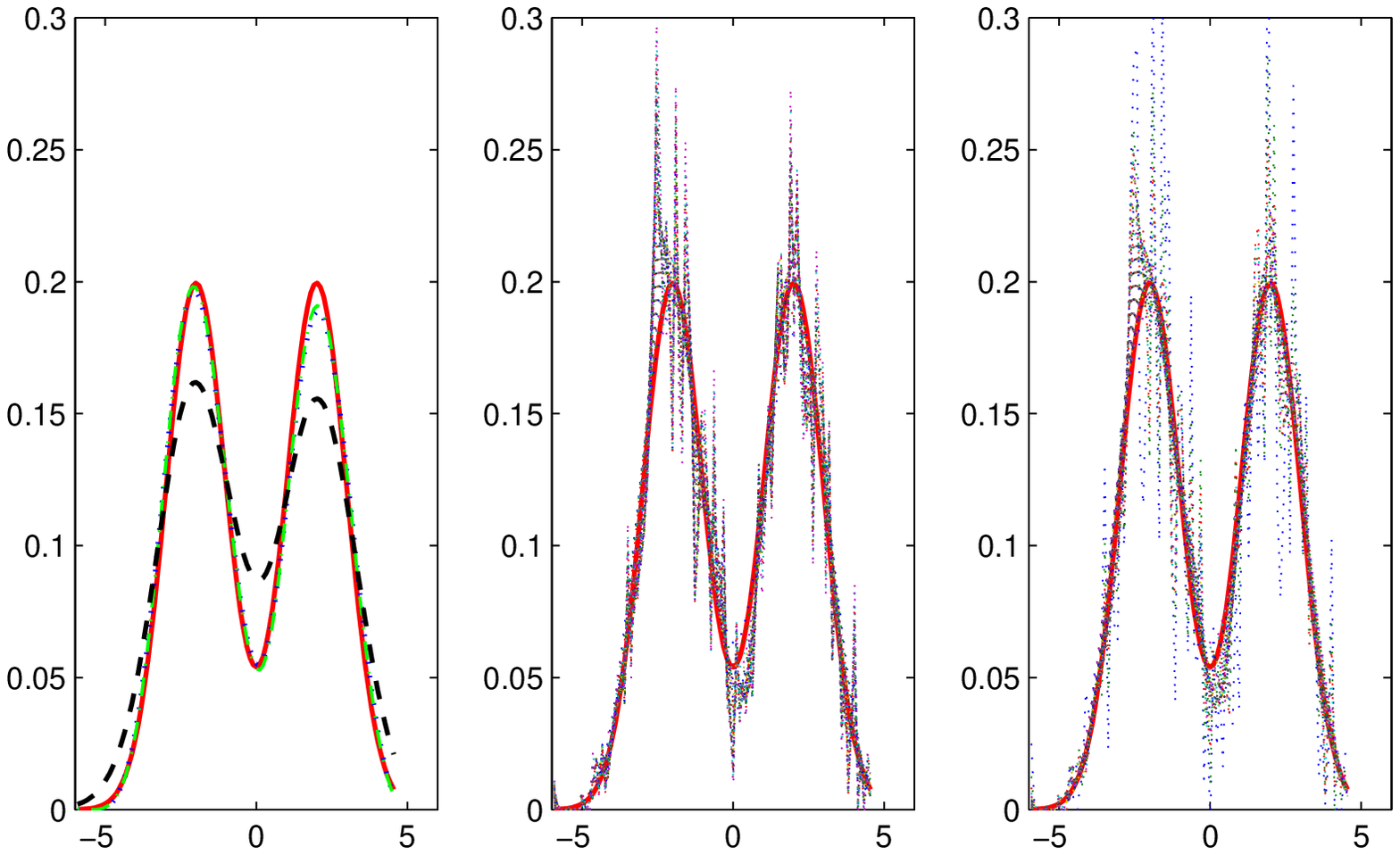}\\
\includegraphics[width=12cm,height=5cm]{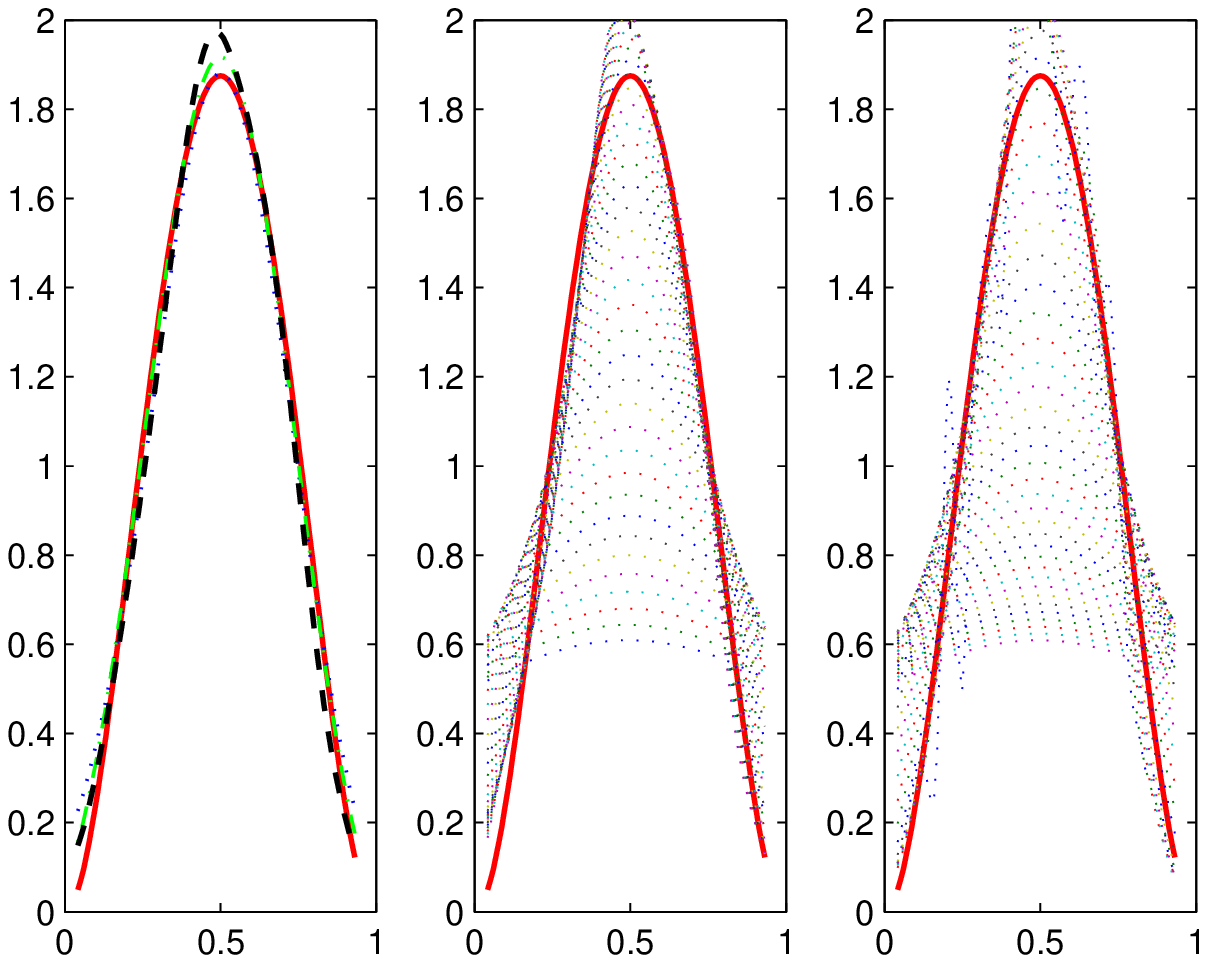}\\
\end{tabular}
\caption{Left: The three estimators (dotted blue LMR-WW, green dash-dotted LMR, black dashed {\tt ks}, the true in bold red. Middle: the 40 proposals for LMR-WW. Right: the 40 proposals for LMR. First line $n=1000$, density $f_{1,m}$,  second line $n=250$, density $f_2$. In all cases, kernel $K_7$.}\label{fig1}
\end{figure}
\begin{figure}
\includegraphics[width=12cm,height=5cm]{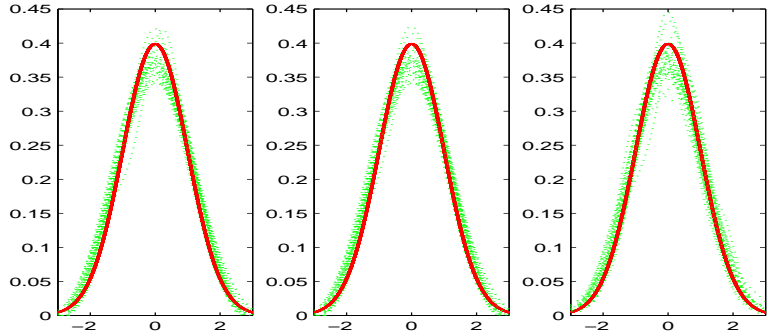}
\caption{Beams of 30 estimators in dotted green of density $f_1$ for $n=250$ and kernel $K_7$, and the true in bold red. Left: LMR-WW estimator. Middle: LMR estimator. Right: {\tt ks} estimator.}\label{fig2}
\end{figure}
Results are gathered in Table~\ref{tab1}  and deserve some comments. As expected, when increasing $n$ from 250 to 1000, the resulting MSEs decrease and seem to be more improved in LMR methods of both types than for {\tt ks} estimator. Increasing the order of the kernel systematically improves the results, except for the lowest regularity density $f_4$, which is at best with $K_3$, but it is interesting to note that taking higher order kernel is always a good strategy: if a loss occurs, it is negligible while the improvement, when it happens, is in all cases significant. Estimator {\tt ks} fails for all mixed densities $f_{m,1}$, $f_{m,2}$ and $f_{m,3}$ and provides rather bad results in these cases, for both sample sizes. For the other densities ($f_1$, $f_2$, $f_3$, $f_4$), the results obtained with kernel $K_7$ and LMR method are better than with {\tt ks}  for $f_1$ (Gaussian case), and of comparable order in all other cases. Now if we compare the LMR and WW-LMR results both with kernel $K_7$, we conclude that the WW-LMR method wins in 10 cases out of 14, but not significantly.
\\
\\
The first line of Figure \ref{fig1} illustrates in the left picture the way Matlab estimator fails for mixed densities (here the mixed Gaussian $f_{m,1}$) by probably selecting a too large bandwidth, here $n=1000$. The two LMR estimators are almost confounded. The middle and right pictures present the $M=40$ estimators among which the LMR procedure makes the selection, for the same path: we observe that the collection of proposals are rather different. The second line of Figure \ref{fig1} presents the same type of results for density $f_2$, and sample size 
$n=250$. Figure \ref{fig2} shows beams of 30 final estimators for sample size $n=250$, for the three estimators LMR-WW with $K_7$, LMR with $K_7$ and {\tt ks}, showing very similar behaviours.
\\
\\
A last remark corresponding to numerical results we do not report in detail is the following. For most densities, the value of $\gamma$ selected by the LMR strategy decreases, and the value of $h$ increases, when the order of the kernel increases. Exceptions are densities with lower regularity (the beta $f_2$, mixed beta $f_{2,m}$ and Laplace $f_4$ densities) for which the last value of selected $h$ with $K_7$ is less than the one selected with $K_5$. This illustrates the fact that, asymptotically, if $\beta$ is the regularity index of the density and $\ell$ the order of the kernel, the optimal choice is for $h$ of order $n^{-1/(2\min(\beta, \ell)+1)}$ and for $\gamma$, $1/(2\min(\beta, \ell)+1)$. This point is further investigated hereafter.
%

% Section : Concluding remarks.
\subsection{Back to recursivity}

However, one may wonder how to keep these ideas compatible with recursive procedures and online updating of the kernel estimator. 

A first possibility is to consider that the adaptive bandwidth, whatever its type, can be selected on a preliminary sample and then, "frozen" to this selected value. The estimator may then be recursively updated with this frozen value, and the procedure would exploit all the observations. 

We compute the mean over 200 repetitions of the selected values  $\widetilde\gamma_n$ for $n$ observations, with increasing values of $n$, for the functions $f_1, \dots, f_4$ defined in Section \ref{simu}. We can see in Table \ref{gammaconv} that, if there is a convergence towards a value, it is very slow. Indeed, keeping in mind the value $1/(2\min(\beta, \ell)+1)$ for $\gamma$, we may expect $\widetilde \gamma_n$ to tend to $1/15=0.67$ for $f_1$, $1/5=0.2$ for $f_2$, $1/9=0.11$ for $f_3$ and a quantity less that $1/5=0.2$ for $f_4$.

\begin{table}
\begin{tabular}{c|cccc}
$n$ & 250 & 1000 & 2000 & 4000 \\ \hline 
$f_1$ &  0.017 &  0.044 &  0.054 & 0.059 \\
& \tiny{(0.023)} &   \tiny{(0.016)} & \tiny{(0.012)} & \tiny{(0.019)} \\
$f_2$ &  0.331 &   0.304   & 0.291   & 0.281\\
  &  \tiny{(0.020)}  &  \tiny{(0.014)} &   \tiny{(0.011)}  & \tiny{(0.013)}\\
$f_3$ &  0.026 &   0.067  &  0.076  &  0.084 \\
& \tiny{(0.027)} & \tiny{(0.020)} &  \tiny{(0.017)} & \tiny{(0.018)}\\
$f_4$ & 0.070 &   0.120 &   0.136  &  0.146 \\
&    \tiny{(0.040)} &   \tiny{(0.039)} &  \tiny{(0.039)} &  \tiny{(0.043)}\\
\end{tabular}
\caption{Mean (and std) of $\widetilde \gamma_n$ for different values of $n$ and functions $f_1, \dots, f_4$, 200 repetitions}\label{gammaconv} 
\end{table}

We provide in Table \ref{frozen} the results obtained for densities $f_1, \dots, f_4$ and sample size 1000 splitted in two parts: $n_0=500$ observations used  for the selection of $\widetilde\gamma_{n_0}$ and $n_1=500$ updates of the resulting estimator with the "frozen" value  $\widetilde \gamma_{n_0}$. We compute the MISE for the first step and final estimator, which relies on $n=n_0+n_1$ observations. The results are given in Table \ref{frozen}. We can see an improvement when going from $n_0$ to $n$ observations, but the results are deteriorated compared with what is obtained with $n=n_0+n_1=1000$ in Table \ref{tab1}.

\begin{table}
\begin{tabular}{cc|cc|cc|cc}
 \multicolumn{2}{c|}{Function $f_1$} & \multicolumn{2}{c|}{Function $f_2$} & 
\multicolumn{2}{c|}{Function $f_3$}  & \multicolumn{2}{c}{Function $f_4$} \\ %\hline
 WW &  Update & WW &  Update& WW & Update& WW & Update \\
&&&&&&& \\
0.146  &    0.112 &   0.879   &  0.414   & 0.188  &  0.131   & 0.393    & 
0.303\\
\tiny{(0.105)}   &  \tiny{(0.082)}   & \tiny{(0.502)}  &  \tiny{(0.214)}  &  \tiny{(0.115)}  &  \tiny{(0.076)} &  \tiny{(0.186)}  &   \tiny{(0.146)} \\
\end{tabular}
\medskip
\caption{Comparison of 100 $\times$ MISE (with 100$\times$ std in parenthesis)
for the estimators: adaptive LMR-WW, and updated recrusively WW with "frozen" $\gamma$, first sample $n_0=500$, update sample with $n_1=500$ additional observations}\label{frozen}
\end{table}

This is why our idea is to exploit the recursive formula to select the $\gamma$-parameter at each step. The price to pay is to store a matrix instead of storing a vector, but the matrix size is fixed. Thus, for one given sample, the procedure is fast and the storage size under control. Precisely, adding an observation leads to update the matrix ${\mathbf F}_n=\left(\widehat f_{n, {\mathbf h}
_n(\gamma^{(j)})}(x_k)\right)_{1\leqslant j\leqslant M, 1\leqslant k\leqslant K}$ for $\Gamma_M=
\{ \gamma^{(1)}, \dots, \gamma^{(M)}\}$ the collection of proposed values of $\/gamma$ and $x_k, k=1, \dots, K$ the set points at which 
the function is estimated. This is the collection of values used to select $\widetilde\gamma_n$.
Then we have $${\mathbf F}_{n+1}=\frac n{n+1}{\mathbf F}_n + \frac 1{n+1}
\left(\frac 1{h_{n+1}(\gamma^{(j)})} K\left(\frac{X_{n+1}-x_k}{h_{n+1}(\gamma^{(j)})}\right)\right)_{1\leqslant j\leqslant M, 1\leqslant k\leqslant K},$$
and we can select $\widetilde \gamma_{n+1}$. The resulting estimator is already computed since it is one of the collection. At each stage, the $M\times K$ matrix is stored to be updated, and the selection procedure has entries the matrix, the value of $n$ and the domain of observations, and gives as output the selected value of $\gamma\in \Gamma_M$ and the corresponding functional estimator,  which corresponds to a line of the matrix. We show in Figure \ref{trajgamma} the 950 selected values of $\widetilde\gamma_n$ for $n=50$ to $n=1000$, for one path of a random sample with density $f_1$, \dots, $f_4$ on the left and $f_{m,1}, f_{m,2}, f_{m,3}$ on the right plot. We consider $M=50$ and $K=100$. We observe on this example a stabilization of the selected value when $n_1+n_0$ gets near of 1000 observations. 
  
\begin{figure}
 \begin{tabular}{cc}
\hspace{-1cm}\includegraphics[width=7cm,height=5cm]{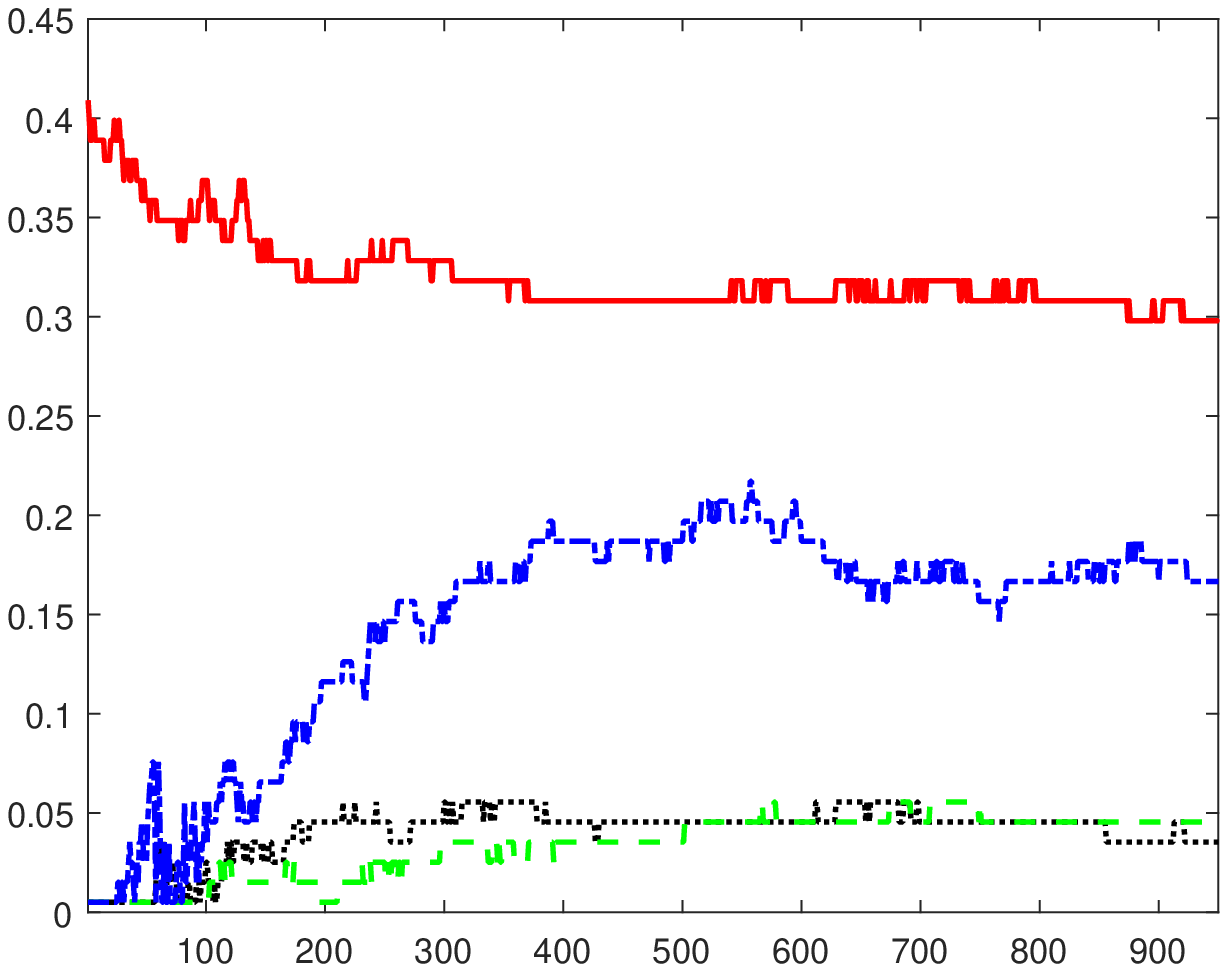}&
\includegraphics[width=7cm,height=5cm]{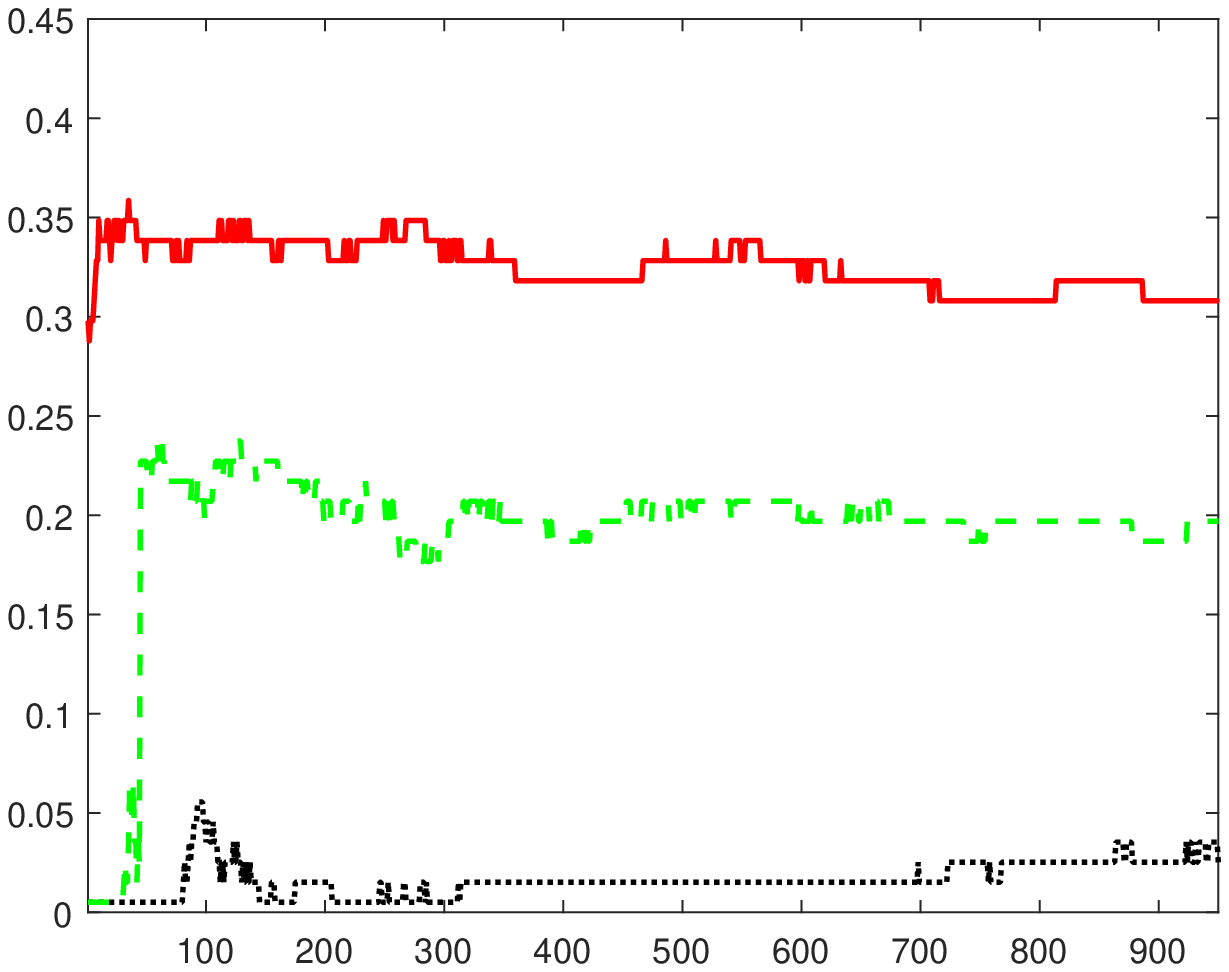}\\
\end{tabular}
\caption{Selected $\widetilde \gamma_n$ from $n=50$ to $n=1000$ for densities  $f_1$ (black dotted), $f_2$ (red line), $f_3$ (green dashed) and $f_4$ (blue dash-dotted) for the left picture, $f_{m,1}$ (black dotted), $f_{m,2}$ (red line), $f_{m,3}$ (green dashed) on the right picture.}\label{trajgamma}
\end{figure}

%{\tt table a faire - Il faudrait des temps de calcul} 
%
%Mise à jour récursive final gamma f1-f4
%
%0.1499    0.1517    0.0837    0.9532    0.6854    0.5082    0.2166    0.2274    
%
%0.1100    0.1103    0.0598    0.5480    0.4230    0.2836    0.1384    0.1461    
%
%
%
%0.1163    0.4092    0.4737    0.2632
%
%0.0698    0.1840    0.2534    0.1271
%
%Mise a jour recursive fm1-fm3-f4
%
%
%0.1465    0.1540    0.0810    0.9454    0.6740    0.4900    0.6934    0.7546    
%
%0.0758    0.0790    0.0474    0.4415    0.3955    0.1925    0.2374    0.2989    
%
%
%
%0.4424    0.4010    0.4742    0.2511
%
%0.1396    0.1959    0.2520    0.1223
%
%plugged in the estimator. We tested this strategy on simulations: for each sample, another independent sample is generated, and a second estimator is computed based on the new sample, with the value of $h$ or $\gamma$ selected for the first data set. Table \ref{Compare} provides the MISEs obtained for the estimator with adaptive bandwidth with kernel $K_7$ for WW-estimator (column $\widehat\gamma$) or for NW-estimator (column $\widehat h$), for comparison with MISEs of the estimator computed on an independent sample with the values based on the previous selections (columns "Fixed $\gamma$" and "Fixed $h$"). Surprisingly, the results have exactly the same orders (we expected a slight deterioration): this proves that the proposal of preliminary selection is really fine. 
%
\section{Concluding remarks}\label{conclusion}
Our study illustrates that bandwidth selection is an important step for kernel functional estimation, and recent methods are really powerful whatever the type of density to recover.
\\
Our simulations show also that, even if it implies non necessarily nonnegative kernels and thus density estimators, increasing the order of the kernel improves the estimation  both in the theory and in practice. Also, we proved that variable bandwidth for WW-type estimators can reach excellent rates, again both in theory and in practice, provided that adaptive choice of this variable bandwidth is performed. The orders of practical MISEs show that this WW-strategy provides results of the same order as the more classical bandwidth methods. 
\\
Lastly, we illustrate that recursivity formula can be used for fast online updating of the whole collection of estimators and the selection of the best one in the sense of our criterion.  
\\

\section{Proofs}\label{Proofs}
%

% Subsection : Proof of Proposition control_MSE_f.

%
\subsection{Proof of Proposition \ref{control_MSE_f}}
First, by the bias-variance decomposition,
\begin{equation}\label{bias_variance_f}
\mathbb E(|\widehat f_{n,\mathbf h_n}(x) - f(x)|^2) =
b_n(f,x)^2 +\textrm{var}(\widehat f_{n,\mathbf h_n}(x))
\end{equation}
where
\begin{displaymath}
b_n(f,x) :=
\mathbb E(\widehat f_{n,\mathbf h_n}(x)) - f(x).
\end{displaymath}
Let us find controls for $b_n(f,x)$ and $\textrm{var}(\widehat f_{n,\mathbf h_n}(x))$.
\\
\\
On the one hand,
\begin{eqnarray*}
 \textrm{var}(\widehat f_{n,\mathbf h_n}(x)) & = &
 \frac{1}{n^2}\sum_{k = 1}^{n}\frac{1}{h_{k}^{2}}\textrm{var}\left(K\left(\frac{X_k - x}{h_k}\right)\right)\\
 & \leqslant &
 \frac{1}{n^2}\sum_{k = 1}^{n}\frac{1}{h_{k}^{2}}
 \int_{-\infty}^{\infty}K\left(\frac{y - x}{h_k}\right)^2f(y)dy\\
 & = &
 \frac{1}{n^2}\sum_{k = 1}^{n}\frac{1}{h_k}
 \int_{-\infty}^{\infty}K(z)^2f(h_kz + x)dz\\
 & \leqslant &
 \frac{c_1}{n^2}\sum_{k = 1}^{n}\frac{1}{h_k},
\end{eqnarray*}
where
%
%\begin{displaymath}
$c_1 \geqslant 
\|f\|_{\infty}\int_{-\infty}^{\infty}K(z)^2dz.$ Note that, from Theorem 1.1 in Tsybakov~\cite{TSYBAKOV09}, there exists a constant $c_{\max}$ such that
$$\sup_{x\in {\mathbb R}}\sup_{f\in \Sigma(\beta, L), f\geqslant 0, \int f=1}f(x)\leqslant 
c_{\max}$$
where $c_{\max}$ depends on $\beta, L$ only. Thus, we can set $c_1:=c_{\max} \|K\|^2_2$. 
%\end{displaymath}
%
On the other hand,
\begin{eqnarray*}
 b_n(f,x) & = &
 -f(x) +
 \frac{1}{n}\sum_{k = 1}^{n}\frac{1}{h_k}\mathbb E\left(
 K\left(\frac{X_k - x}{h_k}\right)\right)\\
 & = &
 -f(x) +
 \frac{1}{n}\sum_{k = 1}^{n}\int_{-\infty}^{\infty}
 K(z)f(h_kz + x)dz\\
 & = &
 \frac{1}{n}\sum_{k = 1}^{n}\int_{-\infty}^{\infty}
 K(z)(f(h_kz + x) - f(x))dz.
\end{eqnarray*}
For every $k\in\{ 1, \dots, n\}$ and $z\in\mathbb R$, by Taylor-Lagrange's formula there exists $\tau\in [0,1]$ such that
\begin{displaymath}
 f(h_kz + x) - f(x) =
 \sum_{i = 1}^{l - 1}\frac{(h_kz)^i}{i!}f^{(i)}(x) +
 \frac{(h_kz)^l}{l!}f^{(l)}(\tau h_kz + x).
\end{displaymath}
Then, by Assumption \ref{conditions_Kernel},
\begin{eqnarray*}
 b_n(f,x) & = &
 \frac{1}{n}\sum_{k = 1}^{n}\int_{-\infty}^{\infty}
 K(z)(f(h_kz + x) - f(x))dz\\
 & = &
 \frac{1}{n}\sum_{k = 1}^{n}\left(
 \sum_{i = 1}^{l - 1}\frac{h_{k}^{i}}{i!}f^{(i)}(x)
 \int_{-\infty}^{\infty}
 z^iK(z)dz +
 \frac{h_{k}^{l}}{l!}
 \int_{-\infty}^{\infty}z^lK(z)f^{(l)}(\tau h_kz + x)dz
 \right)\\
 & = &
 \frac{1}{n}\sum_{k = 1}^{n}\frac{h_{k}^{l}}{l!}
 \int_{-\infty}^{\infty}z^lK(z)f^{(l)}(\tau h_kz + x)dz\\
 & = &
 \frac{1}{n}\sum_{k = 1}^{n}\frac{h_{k}^{l}}{l!}
 \int_{-\infty}^{\infty}z^lK(z)(f^{(l)}(\tau h_kz + x) - f^{(l)}(x))dz.
\end{eqnarray*}
Therefore, by Assumption \ref{Holder_condition},
\begin{eqnarray*}
 |b_n(f,x)| & \leqslant &
 \frac{1}{n}\sum_{k = 1}^{n}\frac{h_{k}^{l}}{l!}
 \int_{-\infty}^{\infty}|z|^l\cdot |K(z)|\cdot |f^{(l)}(\tau h_kz + x) - f^{(l)}(x)|dz\\
 & \leqslant &
 \frac{c_2}{n}\sum_{k = 1}^{n}\frac{h_{k}^{\beta}}{l!}
\end{eqnarray*}
where
%
%\begin{displaymath}
$c_2 :=
L
\int_{-\infty}^{\infty}|z|^{\beta}|K(z)|dz.$
%\end{displaymath}
%
In conclusion, by Equation (\ref{bias_variance_f}), setting $c := c_1\vee c_{2}^{2}$, we get
\begin{displaymath}
\mathbb E(|\widehat f_{n,\mathbf h_n}(x) - f(x)|^2)
\leqslant
\frac{c}{n^2}\left(\left|\sum_{k = 1}^{n}\frac{h_{k}^{\beta}}{l!}\right|^2 +
\sum_{k = 1}^{n}\frac{1}{h_k}\right). \quad \Box
\end{displaymath}
%

% Subsection : Proof of Proposition control_MISE_f.

%
\subsection{Proof of Proposition \ref{control_MISE_f}}
In order to prove Proposition \ref{control_MISE_f}, the two following inequalities are required.
%

% Lemma : Generalized Minkowski's inequality.

%
\begin{lemma}\label{generalized_Minkowski}
For any Borel function $\varphi :\mathbb R^2\rightarrow\mathbb R$, if $y\mapsto \varphi(y,z)$ is integrable and
\begin{displaymath}
y\longmapsto\int_{-\infty}^{\infty}\varphi(y,z)dz
\end{displaymath}
is a Borel function, then
\begin{enumerate}
 \item $\displaystyle{\int_{-\infty}^{\infty}\left(\int_{-\infty}^{\infty}\varphi(y,z)dy\right)^2dz
 \leqslant
 \left(\int_{-\infty}^{\infty}\left(\int_{-\infty}^{\infty}\varphi(y,z)^2dz\right)^{1/2}dy\right)^2}$.
 \item $\displaystyle{\int_{-\infty}^{\infty}\left(\sum_{k = 1}^{n}\varphi(k,z)\right)^2dz
 \leqslant
 \left(\sum_{k = 1}^{n}
 \left(\int_{-\infty}^{\infty}\varphi(k,z)^2dz\right)^{1/2}\right)^2}$.
\end{enumerate}
\end{lemma}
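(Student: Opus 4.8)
The statement to prove is Lemma~\ref{generalized_Minkowski}, which is the generalized Minkowski inequality in two guises: a continuous version (integral over $y$) and a discrete version (sum over $k$). The plan is to prove part (1) directly from the classical Minkowski integral inequality, and then obtain part (2) as the special case where the integrating measure in the $y$-variable is counting measure on $\{1,\dots,n\}$.

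\textbf{Approach to part (1).} The cleanest route is a duality argument. Write $\Phi(z) := \int_{-\infty}^{\infty}\varphi(y,z)\,dy$ and recall that
\[
\left(\int_{-\infty}^{\infty}\Phi(z)^2\,dz\right)^{1/2}
= \sup\left\{\int_{-\infty}^{\infty}\Phi(z)g(z)\,dz \;:\; \|g\|_2\leqslant 1\right\}.
\]
For a fixed $g$ with $\|g\|_2 \leqslant 1$, Tonelli's theorem lets me interchange the order of integration,
\[
\int_{-\infty}^{\infty}\Phi(z)g(z)\,dz
= \int_{-\infty}^{\infty}\left(\int_{-\infty}^{\infty}\varphi(y,z)g(z)\,dz\right)dy,
\]
and then Cauchy--Schwarz in the inner integral gives
\[
\left|\int_{-\infty}^{\infty}\varphi(y,z)g(z)\,dz\right|
\leqslant \left(\int_{-\infty}^{\infty}\varphi(y,z)^2\,dz\right)^{1/2}\|g\|_2
\leqslant \left(\int_{-\infty}^{\infty}\varphi(y,z)^2\,dz\right)^{1/2}.
\]
Integrating this bound over $y$ and taking the supremum over $g$ yields
\[
\left(\int_{-\infty}^{\infty}\Phi(z)^2\,dz\right)^{1/2}
\leqslant \int_{-\infty}^{\infty}\left(\int_{-\infty}^{\infty}\varphi(y,z)^2\,dz\right)^{1/2}dy,
\]
and squaring gives exactly the asserted inequality. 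The measurability hypotheses in the statement are precisely what is needed to justify the Tonelli interchange and to make $\Phi$ a legitimate function; I would invoke them at that step. An alternative, perhaps more self-contained, route is to expand $\int \Phi(z)^2\,dz = \int\!\!\int\!\!\int \varphi(y,z)\varphi(y',z)\,dy\,dy'\,dz$, apply Cauchy--Schwarz in $z$ to the inner pair for each fixed $(y,y')$, and recognize the result as the square of $\int (\int \varphi(y,z)^2\,dz)^{1/2}\,dy$; but the duality argument is shorter and less error-prone.

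\textbf{Approach to part (2).} This is the same inequality with the Lebesgue measure $dy$ replaced by counting measure on $\{1,\dots,n\}$. I would simply rerun the duality argument: for $\|g\|_2\leqslant 1$,
\[
\int_{-\infty}^{\infty}\left(\sum_{k=1}^{n}\varphi(k,z)\right)g(z)\,dz
= \sum_{k=1}^{n}\int_{-\infty}^{\infty}\varphi(k,z)g(z)\,dz
\leqslant \sum_{k=1}^{n}\left(\int_{-\infty}^{\infty}\varphi(k,z)^2\,dz\right)^{1/2},
\]
using linearity of the integral (no interchange theorem needed for a finite sum) and Cauchy--Schwarz termwise as before; then take the supremum over $g$ and square. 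Alternatively one can deduce (2) from (1) by a discretization/approximation of the counting measure, but the direct argument is immediate and cleaner, so that is what I would write.

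\textbf{Main obstacle.} There is no real obstacle here: this is a standard functional-analytic fact. The only point requiring a little care is the measurability bookkeeping---ensuring that $z\mapsto\int\varphi(y,z)\,dy$ is measurable and that the double integrals in the Tonelli step are legitimate---but the hypotheses of the lemma are stated exactly so as to grant this, so I would just cite them at the relevant line. If one prefers to avoid duality entirely, the triple-integral expansion in part (1) is fully elementary and only uses Cauchy--Schwarz and Tonelli, at the cost of a slightly longer computation.
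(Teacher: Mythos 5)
Your proof is correct, but it goes a different route than the paper, which does essentially no work here: for part (1) the authors simply cite Lemma A.1 of Tsybakov (whose proof is the classical one — write $\int(\int|\varphi(y,z)|dy)^2dz$ as a triple integral, interchange by Tonelli, and apply Cauchy--Schwarz in $z$, i.e.\ your "alternative, more self-contained route"), and for part (2) they observe it is nothing but the triangle inequality for the $\mathbb L^2$-norm applied to the finite sum $\sum_k\varphi(k,\cdot)$. Your duality argument ($\|\Phi\|_2=\sup_{\|g\|_2\leqslant 1}\int\Phi g$, then Fubini and Cauchy--Schwarz) is a perfectly standard alternative and buys a self-contained proof that treats (1) and (2) uniformly, at the cost of being heavier than needed for (2), which really is a one-line consequence of Minkowski's inequality in $\mathbb L^2$. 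Two small points of care in your write-up: the interchange of integrals involves a signed integrand, so you should first apply Tonelli/Cauchy--Schwarz to $|\varphi(y,z)g(z)|$ (the case where $\int(\int\varphi(y,z)^2dz)^{1/2}dy=\infty$ being trivial) and then invoke Fubini; and the duality formula for $\|\Phi\|_2$ needs the same trivial-case remark when $\Phi\notin\mathbb L^2$. Neither affects correctness.
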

%

% Proof.

%
\begin{proof}
\noindent
Result (1) is proved in Tsybakov \cite{TSYBAKOV09}, see Lemma A.1 for a proof. Result (2) follows from triangular inequality applied to the $\mathbb L^2$-norm of the sum of $n$ functions.
\end{proof}
\noindent
It has been established in the proof of Proposition \ref{control_MSE_f} that
\begin{equation}\label{control_MISE_f_1}
\textrm{var}(\widehat f_{n,\mathbf h_n}(x))
\leqslant
\frac{1}{n^2}
\sum_{k = 1}^{n}
\frac{1}{h_{k}^{2}}\int_{-\infty}^{\infty}K\left(\frac{y - x}{h_k}\right)^2f(y)dy
\end{equation}
and
\begin{equation}\label{control_MISE_f_2}
b_n(f,x) =
\frac{1}{n}\sum_{k = 1}^{n}\int_{-\infty}^{\infty}
 K(z)(f(h_kz + x) - f(x))dz.
\end{equation}
On the one hand, by Inequality (\ref{control_MISE_f_1}),
\begin{eqnarray*}
 \int_{-\infty}^{\infty}
 \textrm{var}(\widehat f_{n,\mathbf h_n}(x))dx & \leqslant &
 \frac{1}{n^2}
 \sum_{k = 1}^{n}
 \frac{1}{h_{k}^{2}}\int_{-\infty}^{\infty}f(y)
 \int_{-\infty}^{\infty}K\left(\frac{y - x}{h_k}\right)^2dxdy\\
 & = &
 \frac{1}{n^2}
 \sum_{k = 1}^{n}
 \frac{1}{h_k}\left(\int_{-\infty}^{\infty}f(y)dy\right)\left(
 \int_{-\infty}^{\infty}K(z)^2dz\right)\\
 & \leqslant &
 \frac{c_1}{n^2}
 \sum_{k = 1}^{n}
 \frac{1}{h_k} 
\end{eqnarray*}
where
%
%\begin{displaymath}
$c_1 := \|K\|^2_2$.
%\int_{-\infty}^{\infty}K(z)^2dz.
%\end{displaymath}
%
On the other hand, by Taylor's formula with integral remainder,
\begin{displaymath}
f(h_kz + x) - f(x) =
\sum_{i = 1}^{l - 1}\frac{(h_kz)^i}{i!}f^{(i)}(x) +
\frac{(h_kz)^l}{(l - 1)!}
\int_{0}^{1}(1 -\tau)^{l - 1}
f^{(l)}(\tau h_kz + x)d\tau.
\end{displaymath}
Then, by Assumption \ref{conditions_Kernel},
\begin{eqnarray*}
 b_n(f,x) & = &
 \frac{1}{n}\sum_{k = 1}^{n}\int_{-\infty}^{\infty}
 K(z)(f(h_kz + x) - f(x))dz\\
 & = &
 \frac{1}{n}\sum_{k = 1}^{n}
 \sum_{i = 1}^{l - 1}\frac{h_{k}^{i}}{i!}f^{(i)}(x)
 \int_{-\infty}^{\infty}
 z^iK(z)dz\\
 & &
 +\frac{1}{n}\sum_{k = 1}^{n}
 \frac{h_{k}^{l}}{(l - 1)!}
 \int_{-\infty}^{\infty}z^lK(z)
 \int_{0}^{1}(1 -\tau)^{l - 1}f^{(l)}(\tau h_kz + x)d\tau dz\\
 & = &
 \frac{1}{n}\sum_{k = 1}^{n}
 \frac{h_{k}^{l}}{(l - 1)!}
 \int_{-\infty}^{\infty}z^lK(z)
 \int_{0}^{1}(1 -\tau)^{l - 1}(f^{(l)}(\tau h_kz + x) - f^{(l)}(x))d\tau dz.
\end{eqnarray*}
By Lemma \ref{generalized_Minkowski}.(2),
\begin{displaymath}
\int_{-\infty}^{\infty}b_n(f,x)^2dx\leqslant
\frac{1}{n^2}
\left(
\sum_{k = 1}^{n}
\frac{h_{k}^{l}}{(l - 1)!}u_{k}^{1/2}\right)^2
\end{displaymath}
where
\begin{displaymath}
u_k :=
\int_{-\infty}^{\infty}\left|
\int_{-\infty}^{\infty}z^lK(z)
\int_{0}^{1}(1 -\tau)^{l - 1}(f^{(l)}(\tau h_kz + x) - f^{(l)}(x))d\tau dz\right|^2dx.
\end{displaymath}
By Lemma \ref{generalized_Minkowski}.(1), for every $k\in \{1, \dots, l\}$,
\begin{displaymath}
u_k
\leqslant
\left(\int_{-\infty}^{\infty}|z|^l|K(z)|\int_{0}^{1}(1 - \tau)^{l - 1}
\left(\int_{-\infty}^{\infty}
|f^{(l)}(\tau h_k z + x) - f^{(l)}(x)|^2dx\right)^{1/2}d\tau dz\right)^2.
\end{displaymath}
Therefore, by Assumption \ref{Nikolski_condition},
\begin{eqnarray*}
 \int_{-\infty}^{\infty}b_n(f,x)^2dx & \leqslant &
 \frac{c_2}{n^2}
 \left(
 \sum_{k = 1}^{n}
 \frac{h_{k}^{\beta}}{(l - 1)!}\right)^2
\end{eqnarray*}
where
\begin{displaymath}
c_2 :=
L^2\left(\int_{-\infty}^{\infty}|z|^{\beta}|K(z)|dz\right)^2
\end{displaymath}
since $\left(\int_{0}^{1}(1 -\tau)^{l - 1}\tau^{\beta - l}d\tau\right)^2\leqslant 1$. 
In conclusion, by Equation (\ref{bias_variance_f}), setting $c := c_1\vee c_2$, we get
\begin{displaymath}
\int_{-\infty}^{\infty}
\mathbb E(|\widehat f_{n,\mathbf h_n}(x) - f(x)|^2)dx
\leqslant
\frac{c}{n^2}\left(\left|\sum_{k = 1}^{n}\frac{h_{k}^{\beta}}{(l - 1)!}\right|^2 +
\sum_{k = 1}^{n}\frac{1}{h_k}\right). \quad \Box
\end{displaymath}
%

% Subsection : Proof of Proposition Goldenshluger_Lepski.

%
\subsection{Proof of Proposition \ref{Goldenshluger_Lepski}}
For any $\gamma$ in $\Gamma_n$, we have 
\begin{eqnarray*}
 \|\widehat f_{n,\mathbf h_n(\widehat\gamma_n)} - f\|_{2}^{2}
 & \leqslant &
3( \|\widehat f_{n,\mathbf h_n(\widehat\gamma_n)} -
 \widehat f_{n,\widehat\gamma_n,\gamma}\|_{2}^{2}  +
 \|\widehat f_{n,\mathbf h_n(\gamma)} -
 \widehat f_{n,\widehat\gamma_n,\gamma}\|_{2}^{2}\\
&& +
 \|\widehat f_{n,\mathbf h_n(\gamma)} - f\|_{2}^{2}).
\end{eqnarray*}
By the definition of $A_n$,
\begin{eqnarray*}
 \|\widehat f_{n,\mathbf h_n(\widehat\gamma_n)} -
 \widehat f_{n,\widehat\gamma_n,\gamma}\|_{2}^{2} & = &
 \|\widehat f_{n,\mathbf h_n(\widehat\gamma_n)} -
 \widehat f_{n,\gamma,\widehat\gamma_n}\|_{2}^{2}\\
 & \leqslant &
 A_n(\gamma) + V_n(\widehat\gamma_n)
\end{eqnarray*}
and
\begin{displaymath}
\|\widehat f_{n,\mathbf h_n(\gamma)} -
\widehat f_{n,\widehat\gamma_n,\gamma}\|_{2}^{2}
\leqslant
A_n(\widehat\gamma_n) + V_n(\gamma).
\end{displaymath}
Thus we get that, for any $\gamma\in\Gamma_n$,
\begin{eqnarray*}
 \|\widehat f_{n,\mathbf h_n(\widehat\gamma_n)} - f\|_{2}^{2} \leqslant 3( A_n(\gamma) + V_n(\widehat\gamma_n) +  A_n(\widehat\gamma_n) + V_n(\gamma)+
  \|\widehat f_{n,\mathbf h_n(\gamma)} - f\|_{2}^{2}).
\end{eqnarray*}
Since
\begin{displaymath}
\widehat\gamma_n\in
\arg\min_{\gamma\in\Gamma_n}
(A_n(\gamma) + V_n(\gamma)),
\end{displaymath}
it follows that, for any $\gamma\in\Gamma_n$,
\begin{eqnarray}  \label{Goldenshluger_Lepski_1}
 \mathbb E(\|\widehat f_{n,\mathbf h_n(\widehat\gamma_n)} - f\|_{2}^{2})
 & \leqslant &
3\mathbb E(\|\widehat f_{n,\mathbf h_n(\gamma)} - f\|_{2}^{2}) +
 6[\mathbb E(A_n(\gamma)) + V_n(\gamma)]. 
% & \leqslant &
% c_2(2\mathbb E(\|\widehat f_{n,\mathbf h_n(\gamma)} - f_{n,\gamma}\|_{2}^{2})\\
% & & +
% 2\mathbb E(\|f_{n,\gamma} - f\|_{2}^{2})+
% \mathbb E(A_n(\gamma)) + V_n(\gamma)),\nonumber
\end{eqnarray}
Now, let us find a suitable control for $\mathbb E(A_n(\gamma))$. For that, consider
\begin{displaymath}
f_{n,\gamma} :=
\frac{1}{n}\sum_{k = 1}^{n}K_{h_k(\gamma)}\ast f \quad \mbox{ and } \quad
f_{n,\gamma,\gamma'} :=
\frac{1}{n}\sum_{k = 1}^{n}
K_{h_k(\gamma')}\ast
K_{h_k(\gamma)}\ast f,
\end{displaymath}
with obviously $f_{n,\gamma}(x) = {\mathbb E}(\widehat f_{n,\mathbf h_n(\gamma)}(x))$ and $f_{n,\gamma,\gamma'}(x) = {\mathbb E}(\widehat f_{n,\gamma,\gamma'}(x))$.
Now,  for any $\gamma, \gamma'\in\Gamma_n$,
\begin{eqnarray*}
 \|\widehat f_{n,\mathbf h_n(\gamma')} -
 \widehat f_{n,\gamma,\gamma'}\|_{2}^{2}
 & \leqslant &
3( \|\widehat f_{n,\mathbf h_n(\gamma')} -
 f_{n,\gamma'}\|_{2}^{2}  +
 \|f_{n,\gamma'} - f_{n,\gamma,\gamma'}\|_{2}^{2}
 +
 \|\widehat f_{n,\gamma,\gamma'} -
 f_{n,\gamma,\gamma'}\|_{2}^{2}).
\end{eqnarray*}
Thus, using the definition of $A_n(\gamma)$, we get that for any $\gamma\in \Gamma_n$, 
\begin{eqnarray}
 \label{Goldenshluger_Lepski_2}
 A_n(\gamma)
 & \leqslant &
 3\left(\sup_{\gamma'\in\Gamma_n}\left(
 \|\widehat f_{n,\mathbf h_n(\gamma')} -
 f_{n,\gamma'}\|_{2}^{2} -
 \frac{V_n(\gamma')}{6}\right)_+\right.\\
 & &
 +\left.
 \sup_{\gamma'\in\Gamma_n}\left(
 \|\widehat f_{n,\gamma,\gamma'} -
 f_{n,\gamma,\gamma'}\|_{2}^{2} -
 \frac{V_n(\gamma')}{6}\right)_+ +
 \|f_{n,\gamma'} - f_{n,\gamma,\gamma'}\|_{2}^{2}\right).
 \nonumber
\end{eqnarray}
Let us control each terms of the right-hand side of Inequality (\ref{Goldenshluger_Lepski_2}). On the one hand, by Lemma \ref{generalized_Minkowski}.(2),
\begin{eqnarray} \nonumber
 \|f_{n,\gamma'} -
 f_{n,\gamma,\gamma'}\|_{2}^{2} & = &
 \left\|\frac{1}{n}\sum_{k = 1}^{n}K_{h_k(\gamma')}\ast (f - K_{h_k(\gamma)}\ast f)\right\|_{2}^{2}\\ \nonumber 
 & \leqslant &
 \frac{1}{n^2}\left|\sum_{k = 1}^{n}\|K_{h_k(\gamma')}\ast (f - K_{h_k(\gamma)}\ast f)\|_2\right|^2\\ \label{Goldenshluger_Lepski_3}
 & \leqslant &
 \frac{\|K\|_{1}^{2}}{n^2}\left|
 \sum_{k = 1}^{n}\|f - K_{h_k(\gamma)}\ast f\|_2\right|^2.
\end{eqnarray}
On the other hand, let $\mathcal C$ be a countable and dense subset of the unit sphere of $\mathbb L^2(\mathbb R,dx)$. Then,
\begin{small}
\begin{displaymath}
 \mathbb E\left(\sup_{\gamma'\in\Gamma_n}\left(
 \|\widehat f_{n,\mathbf h_n(\gamma')} -
 f_{n,\gamma'}\|_{2}^{2} -
 \frac{V_n(\gamma')}{6}\right)_+\right)\leqslant
 \sum_{\gamma'\in\Gamma_n}\mathbb E\left(\left(
 \sup_{\psi\in\mathcal C}\mathfrak v_{n,\gamma'}(\psi)^2 -
 \frac{V_n(\gamma')}{6}\right)_+\right)
\end{displaymath}
\end{small}
\newline
where, for every $\psi\in\mathcal C$,
\begin{eqnarray*}
 \mathfrak v_{n,\gamma'}(\psi) & := &
 \langle\psi,\widehat f_{n,\mathbf h_n(\gamma')} -
 f_{n,\gamma'}\rangle_2\\
 & = &
 \frac{1}{n}\sum_{k = 1}^{n}(
 v_{\psi}(h_k(\gamma'),X_k) -\mathbb E(v_{\psi}(h_k(\gamma'),X_k)))
\end{eqnarray*}
and
\begin{displaymath}
\upsilon_{\psi}(h,y) :=
\int_{-\infty}^{\infty}\psi(x)K_h(y - x)dx
\textrm{ $;$ }
\forall (h,y)\in (1/n,1)\times\mathbb R.
\end{displaymath}
In order to apply Talagrand's inequality (see Klein and Rio \cite{KR05}):
\begin{enumerate}
 \item For every $\psi\in\mathcal C$, $h\in (1/n,1)$ and $y\in\mathbb R$,
 \begin{eqnarray*}
  |v_{\psi}(h,y)| & \leqslant &
  \int_{-\infty}^{\infty}|\psi(x)|K_h(y - x)dx\\
  & \leqslant &
  \|K_h(y -\cdot)\|_2 =
  \frac{1}{\sqrt h}\|K\|_2
  \leqslant\|K\|_2\sqrt n.
 \end{eqnarray*}
 Then,
 \begin{displaymath}
 \sup_{\psi\in\mathcal C}
 \|v_{\psi}\|_{\infty}
 \leqslant M_1(n) :=\|K\|_2\sqrt n.
 \end{displaymath}
 \item For every $\psi\in\mathcal C$,
 \begin{eqnarray*}
  \mathfrak v_{n,\gamma'}(\psi)^2 & = &
  \langle\psi,\widehat f_{n,\mathbf h_n(\gamma')} - f_{n,\gamma'}\rangle_{2}^{2}\\
  & \leqslant &
  \|\widehat f_{n,\mathbf h_n(\gamma')} - f_{n,\gamma'}\|_{2}^{2}
  =\int_{-\infty}^{\infty}|\widehat f_{n,\mathbf h_n(\gamma')}(x) -\mathbb E(\widehat f_{n,\mathbf h_n(\gamma')}(x))|^2dx.
 \end{eqnarray*}
 Then, as established in the proof of Proposition \ref{control_MISE_f},
 \begin{eqnarray*}
  \mathbb E\left(\sup_{\psi\in\mathcal C}|\mathfrak v_{n,\gamma'}(\psi)|\right) & \leqslant &
  \left|\int_{-\infty}^{\infty}\textrm{var}(\widehat f_{n,\mathbf h_n(\gamma')}(x))dx\right|^{1/2}\\
  & \leqslant &
  \frac{\|K\|_2}{n}\left|\sum_{k = 1}^{n}\frac{1}{h_k(\gamma')}\right|^{1/2} \leqslant   M_2(n,\gamma') :=
  \frac{V_n(\gamma')^{1/2}}{\upsilon^{1/2}},
 \end{eqnarray*}
as $\|K\|_1\geqslant 1=|\int K|$.
  \item For every $\psi\in\mathcal C$ and $k\in\{1, \dots, n\}$,
 \begin{eqnarray*}
  \textrm{var}(v_{\psi}(h_k(\gamma'),X_k))
  & \leqslant &
  \mathbb E\left(\left|
  \int_{-\infty}^{\infty}
  K_{h_k(\gamma')}(X_k - x)\psi(x)dx\right|^2\right)\\
  & = &
  \int_{-\infty}^{\infty}
  (K_{h_k(\gamma')}\ast\psi)(y)^2f(y)dy\leqslant
  \|f\|_{\infty}\|K\|_{1}^{2}.
 \end{eqnarray*}
 Then,
 \begin{displaymath}
 \sup_{\psi\in\mathcal C}
 \frac{1}{n}\sum_{k = 1}^{n}
 \textrm{var}(v_{\psi}(h_k(\gamma'),X_k))
 \leqslant
 M_3 :=
 \|f\|_{\infty}\|K\|_{1}^{2}.
 \end{displaymath}
\end{enumerate}
By applying Talagrand's inequality to $(v_{\psi})_{\psi\in\mathcal C}$ and to the independent random variables $(h_1(\gamma'),X_1),\dots,(h_n(\gamma'),X_n)$, there exist two numerical constants $c_1,c_2 > 0$, and two constant $c_3, c_4 > 0$ depending only on $f$, $K$ and $\upsilon$, such that
\begin{eqnarray*}
 \mathbb E\left(\left(\sup_{\psi\in\mathcal C}
 \mathfrak v_{n,\gamma'}(\psi)^2 - 4M_2(n,\gamma')^2\right)_+\right)
 & \leqslant &
 c_1\left(\frac{M_3}{n}\exp\left(
 -\frac{c_2}{M_3}
 nM_2(n,\gamma')^2\right)\right.\\
 & &
 +\left.
 \frac{M_1(n)^2}{n^2}\exp\left(-c_2\frac{nM_2(n,\gamma')}{M_1(n)}\right)\right)\\
 & \leqslant &
 \frac{c_3}{n}(
 \exp(-c_4/\mathfrak h_n(\gamma')) +
 \exp(-c_4/\mathfrak h_n(\gamma')^{1/2})).
\end{eqnarray*}
Then, by Assumption \ref{assumption_Gamma_n}, with $\upsilon \geqslant 24$, there exists a constant $c_5 > 0$, not depending on $n$, such that
\begin{equation}\label{Goldenshluger_Lepski_4}
\mathbb E\left(\sup_{\gamma'\in\Gamma_n}\left(
\|\widehat f_{n,\mathbf h_n(\gamma')} -
f_{n,\gamma'}\|_{2}^{2} -
\frac{V_n(\gamma')}{6}\right)_+\right)
\leqslant
\frac{c_5}{n}.
\end{equation}
The same ideas give that there exists a constant $c_{6} > 0$, not depending on $n$, such that
\begin{equation}\label{Goldenshluger_Lepski_5}
\sup_{\gamma\in\Gamma_n}
\mathbb E\left(\sup_{\gamma'\in\Gamma_n}\left(
\|\widehat f_{n,\gamma,\gamma'} -
f_{n,\gamma,\gamma'}\|_{2}^{2} -
\frac{V_n(\gamma')}{6}\right)_+\right)
\leqslant
\frac{c_{6}}{n}.
\end{equation}
By Inequalities (\ref{Goldenshluger_Lepski_2})-(\ref{Goldenshluger_Lepski_5}), we get that, for all $\gamma\in \Gamma_n$,
$${\mathbb E}(A_n(\gamma))\leqslant 3 \left(  \frac{\|K\|_{1}^{2}}{n^2}\left|
 \sum_{k = 1}^{n}\|f - K_{h_k(\gamma)}\ast f\|_2\right|^2 + \frac{c_5+c_6}n\right).$$
Plugging this in Inequality (\ref{Goldenshluger_Lepski_1}), using that
$$\mathbb E(\|\widehat f_{n,\mathbf h_n(\gamma)} - f\|_{2}^{2})\leqslant 
\frac{1}{n^2}\left|
 \sum_{k = 1}^{n}\|f - K_{h_k(\gamma)}\ast f\|_2\right|^2 + \frac{1}{\upsilon}  V_n(\gamma),$$
 we get
\begin{equation}\label{GL7}
\mathbb E(\|\widehat f_{n,\mathbf h_n(\widehat\gamma_n)} - f\|_{2}^{2})
\leqslant
c_7
\inf_{\gamma\in\Gamma_n}
\left\{V_n(\gamma) + 
\frac{1}{n^2}\left|\sum_{k = 1}^{n}\|f - K_{h_k(\gamma)}\ast f\|_2\right|^2\right\} +
\frac{c_8}{n}
\end{equation}
where $c_{7}=3 \max( 2+\frac 1{\upsilon},1+6\|K\|_1^2)$ and $c_8 > 0$ is not depending on $n$. Note that, as $\upsilon\geqslant 24$ and $\|K\|_1^2\geqslant 1$, $c_7=18 \|K\|_1^2$. We obtain the first inequality of Proposition \ref{Goldenshluger_Lepski}.
\\
\\
To get (\ref{GLresult}), we write
\begin{displaymath}
V_n(\gamma) =
\upsilon\frac{\|K\|_2^2\|K\|_1^2}{n\, \mathfrak h_n(\gamma)} =
\upsilon \|K\|_2^2\|K\|_1^2 \mathbb V_n(\gamma)
\end{displaymath}
and, if Assumptions \ref{conditions_Kernel} and \ref{Nikolski_condition} hold, as established in the proof of Proposition \ref{control_MISE_f}, there exists a constant $c_{9} > 0$ which does not depend on $n$ such that
\begin{eqnarray*}
 \sum_{k = 1}^{n}\|f - K_{h_k(\gamma)}\ast f\|_2
 & = &
 \sum_{k = 1}^{n}\left(\int_{-\infty}^{\infty}\left|\int_{-\infty}^{\infty}K(z)(f(h_k(\gamma)z + x) - f(x))dz\right|^2dx\right)^{1/2}\\
 & \leqslant &
 c_{9}\sum_{k = 1}^{n}h_k(\gamma)^{\beta}
 = c_{9}n\mathbb B_n(\gamma)^{1/2}.
\end{eqnarray*}
Plugging these last two bounds in (\ref{GL7}) yields (\ref{GLresult}) and ends the proof of Proposition~\ref{Goldenshluger_Lepski}.
%

% Subsection : Proof of proposition LMR_method.

%
\subsection{Proof of Proposition \ref{LMR_method}}
For this proof, we use the tools and follow the lines given in the proof of Theorem 2 in Lacour {\it et al.} \cite{LMR17}.
\\
\\
Throughout this section, for every $h > 0$, we consider $f_h := f\ast K_h$, where $\ast$ is the convolution product and we recall that $K_h = 1/hK(\cdot /h)$. Note that for every $h > 0$ and $k\in\{1, \dots, n\}$,
\begin{displaymath}
\mathbb E(K_h(X_k - x)) =
\int_{-\infty}^{\infty}K_h(y - x)f(y)dy =
f_h(x).
\end{displaymath}
We also consider $\lambda\in [1,\infty[$, $\varepsilon,\theta\in (0,1)$ and $\gamma\in\Gamma_n$.
\\
\\
In order to prove Proposition \ref{LMR_method}, let us first establish the three following lemmas providing suitable bounds for key quantities involved in the decomposition of
\begin{displaymath}
\|\widehat f_{n,\mathbf h_n(\widetilde\gamma_n)} - f\|_{2}^{2}.
\end{displaymath}
Throughout this section, the positive constants $\kappa_i$ ; $i\in\mathbb N^*$ are not depending on $n$, $\lambda$, $\theta$, $\varepsilon$ and $\gamma$.
%

% Subsubsection : Steps of the proof.

%
\subsubsection{Steps of the proof}
The proof relies on three Lemmas, which are stated first. Lemma \ref{U_statistic_bound} follows from an exponential inequality for $U$-statistics, which is applied here in a non identically distributed context. 
%

% Lemma : A U-statistic bound.

%
\begin{lemma}\label{U_statistic_bound}
Consider the $U$-statistic
\begin{displaymath}
U_n(\gamma,\gamma_{\max}) :=
\sum_{k\not= l}\langle K_{h_k(\gamma)}(X_k -\cdot) - f_{h_k(\gamma)},
K_{h_l(\gamma_{\max})}(X_l -\cdot) - f_{h_l(\gamma_{\max})}
\rangle_2.
\end{displaymath}
There exists a universal constant $\mathfrak c > 0$ such that with probability larger than $1 - 5.54|\Gamma_n|e^{-\lambda}$, 
\begin{displaymath}
\frac{|U_n(\gamma,\gamma_{\max})|}{n^2}
\leqslant
\frac{\theta\|K\|_{2}^{2}}{n\mathfrak h_n(\gamma)} +
\frac{\mathfrak c}{\theta}\left(\frac{\|K\|_{1}^{2}\|f\|_{\infty}}{n}\lambda^2 +
\frac{\|K\|_1\|K\|_{\infty}}{n^2h_n(\gamma_{\max})}\lambda^3\right).
\end{displaymath}
\end{lemma}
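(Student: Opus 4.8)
The plan is to regard $U_n(\gamma,\gamma_{\max})$ as a completely degenerate $U$-statistic of order two and to apply the Houdr\'e--Reynaud-Bouret deviation inequality \cite{HRB03}, following the proof of Theorem~2 in \cite{LMR17} but in its form valid for independent, not necessarily identically distributed summands (here $X_1,\dots,X_n$ are i.i.d.\ with density $f$, yet they enter $U_n$ through the $k$-dependent bandwidths $h_k(\gamma)$ and $h_k(\gamma_{\max})$). I would first symmetrise: writing $g_{k,l}(x,y):=\langle K_{h_k(\gamma)}(x-\cdot)-f_{h_k(\gamma)},\,K_{h_l(\gamma_{\max})}(y-\cdot)-f_{h_l(\gamma_{\max})}\rangle_2$, one has $U_n(\gamma,\gamma_{\max})=\sum_{k\neq l}g_{k,l}(X_k,X_l)=\sum_{k<l}\widetilde g_{k,l}(X_k,X_l)$ with the symmetric kernel $\widetilde g_{k,l}(x,y):=g_{k,l}(x,y)+g_{l,k}(y,x)$. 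Degeneracy is immediate from $f_h=K_h\ast f$: since $\mathbb E(K_{h_k(\gamma)}(X_k-\cdot))=f_{h_k(\gamma)}$, each $g_{k,l}$ is centered in each of its two arguments, so $(\widetilde g_{k,l})$ is canonical and the inequality applies.

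For a fixed $\gamma\in\Gamma_n$ the inequality then produces a universal constant $\mathfrak c$ and an event of probability at least $1-5.54\,e^{-\lambda}$ on which $|U_n(\gamma,\gamma_{\max})|\leqslant\mathfrak c\,(A\sqrt\lambda+B\lambda+C\lambda^{3/2}+D\lambda^2)$, where $A,B,C,D$ are the four standard functionals of $(\widetilde g_{k,l})$ (the weak variance, two mixed weak-variance terms, and the uniform norm). The core of the proof is to estimate these in terms of $\|f\|_\infty$, $\|K\|_1$, $\|K\|_2$, $\|K\|_\infty$ and the bandwidths, using the ordering $h_n(\gamma_{\max})\leqslant h_k(\gamma')\leqslant h_k(\gamma)$ for all $k$ and $\gamma'\leqslant\gamma_{\max}$ (hence $\mathfrak h_n(\gamma_{\max})\leqslant\mathfrak h_n(\gamma)$), Cauchy--Schwarz, and Young's convolution inequality $\|K_h\ast\psi\|_2\leqslant\|K\|_1\|\psi\|_2$. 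The variance-type factors $A,B,C$ are controlled, as in \cite{LMR17}, through the covariance operators of $K_{h_k(\gamma')}(X-\cdot)$ — whose trace is at most $\|K\|_2^2/h_k(\gamma')$ and whose operator norm is at most $\|f\|_\infty\|K\|_1^2$ — together with the elementary bound $\int\mathrm{var}(\widehat f_{n,\mathbf h_n(\gamma')}(x))\,dx\leqslant\|K\|_2^2/(n\,\mathfrak h_n(\gamma'))$ already obtained in the proof of Proposition~\ref{control_MISE_f}; they are arranged so that, after division by $n^2$, the $\sqrt\lambda$-term is of the form $2\sqrt{x\,y}$ with $x=\|K\|_2^2/(n\,\mathfrak h_n(\gamma))$ and $y=O(\|K\|_1^2\|f\|_\infty\lambda/n)$, while the $\lambda$- and $\lambda^{3/2}$-contributions are $O(\|K\|_1^2\|f\|_\infty\lambda/n)$ and $O(\|K\|_1^2\|f\|_\infty\lambda^{3/2}/n)$ up to lower-order corrections. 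The uniform-norm factor $D=\sup_{k\neq l}\|\widetilde g_{k,l}\|_\infty$ is the one carrying $h_n(\gamma_{\max})^{-1}$: the $\mathbb L^1$--$\mathbb L^\infty$ H\"older bound $|g_{k,l}(x,y)|\leqslant\|K_{h_k(\gamma)}(x-\cdot)-f_{h_k(\gamma)}\|_1\,\|K_{h_l(\gamma_{\max})}(y-\cdot)-f_{h_l(\gamma_{\max})}\|_\infty$, with $\|K_{h_l(\gamma_{\max})}\|_\infty=\|K\|_\infty/h_l(\gamma_{\max})\leqslant\|K\|_\infty/h_n(\gamma_{\max})$, $\|K\|_1\geqslant1$, and Assumption~\ref{conditions_LMR} to absorb the $f_h$ parts, gives $D\lesssim\|K\|_1\|K\|_\infty/h_n(\gamma_{\max})$ up to a term of constant order.

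To conclude I would assemble these estimates. Dividing by $n^2$ and splitting the $\sqrt\lambda$-term by $2\sqrt{x\,y}\leqslant\theta x+\theta^{-1}y$ peels off exactly the leading term $\theta\|K\|_2^2/(n\,\mathfrak h_n(\gamma))$; the resulting $\theta^{-1}$-remainder, together with the $B\lambda$, $C\lambda^{3/2}$ and the constant-order part of $D\lambda^2$, is absorbed into $\mathfrak c\,\theta^{-1}\|K\|_1^2\|f\|_\infty\lambda^2/n$ after the crude bounds $\lambda\leqslant\lambda^2$, $\lambda^{3/2}\leqslant\lambda^2$ (valid since $\lambda\geqslant1$) and $\mathfrak h_n(\gamma)/\mathfrak h_n(\gamma_{\max})\leqslant n$ (a consequence of $h_n(\gamma_{\max})\geqslant1/n$), while the main part of $D\lambda^2/n^2$ becomes $\mathfrak c\,\theta^{-1}\|K\|_1\|K\|_\infty\lambda^3/(n^2h_n(\gamma_{\max}))$ after replacing $\lambda^2$ by $\lambda^3$. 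A union bound over the $|\Gamma_n|$ admissible values of $\gamma$ then yields the probability $1-5.54\,|\Gamma_n|e^{-\lambda}$. I expect the main obstacle to be the middle step: the precise identification of the four Houdr\'e--Reynaud-Bouret functionals for this non-identically distributed $U$-statistic, the bookkeeping of which of $\mathfrak h_n(\gamma)$, $\mathfrak h_n(\gamma_{\max})$ or $h_n(\gamma_{\max})$ governs each, and carrying the computations through for possibly non-positive higher-order kernels, so that pointwise-positivity shortcuts must be replaced throughout by Cauchy--Schwarz bounds on the auto-correlation $K\star K$ and by the $\|K\|_1$, $\|K\|_\infty$ norms.
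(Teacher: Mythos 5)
Your overall strategy is exactly the paper's: symmetrise, observe that the kernel is canonical (degenerate) because $\mathbb E\,K_{h_k(\gamma)}(X_k-\cdot)=f_{h_k(\gamma)}$, invoke Houdr\'e--Reynaud-Bouret (Theorem 3.4 of \cite{HRB03}) for the non identically distributed summands, bound the four functionals, split by Young's inequality, and finish with a union bound over $\Gamma_n$; the sup-norm functional is also bounded as in the paper by $\|K\|_1\|K\|_{\infty}/h_n(\gamma_{\max})$ up to a factor $8$.

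There is, however, a genuine flaw in your assembly of the two middle terms. You claim that the $\lambda$-term (operator-norm-type functional) and the $\lambda^{3/2}$-term (mixed functional) are of order $\|K\|_1^2\|f\|_\infty\lambda/n$ and $\|K\|_1^2\|f\|_\infty\lambda^{3/2}/n$ and can then be absorbed into the $\theta^{-1}\lambda^2/n$ remainder via $\lambda\leqslant\lambda^2$, $\lambda^{3/2}\leqslant\lambda^2$. This is not true: after division by $n^2$ these terms scale as
\begin{displaymath}
\frac{D\lambda}{n^2}\;\asymp\;\lambda\,\|f\|_\infty^{1/2}\|K\|_1\|K\|_2\,
\frac{1}{\sqrt n}\Bigl(\frac{1}{n\,\mathfrak h_n(\gamma)}\Bigr)^{1/2},
\qquad
\frac{B\lambda^{3/2}}{n^2}\;\asymp\;\lambda^{3/2}\|K\|_2^2
\Bigl(\frac{1}{n\,\mathfrak h_n(\gamma)}\Bigr)^{1/2}
\Bigl(\frac{1}{n^2h_n(\gamma_{\max})}\Bigr)^{1/2},
\end{displaymath}
and since $\mathfrak h_n(\gamma)\to 0$ in the nonparametric regime (e.g. $h_k(\gamma)=k^{-\gamma}$ gives $\mathfrak h_n(\gamma)\asymp n^{-\gamma}$), neither is $O(\lambda^2/n)$, nor is the mixed term $O(\lambda^3/(n^2h_n(\gamma_{\max})))$ outright; your crude bound $\mathfrak h_n(\gamma)/\mathfrak h_n(\gamma_{\max})\leqslant n$ does not rescue this. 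The correct treatment — and what the paper does — is to apply the same splitting $2\sqrt{xy}\leqslant\theta x+\theta^{-1}y$ that you reserve for the $\sqrt\lambda$-term to each of these two terms as well, with a budget $\theta/3$ each: the $\lambda$-term then yields $\tfrac{\theta}{3}\|K\|_2^2/(n\,\mathfrak h_n(\gamma))+\tfrac{12}{\theta}\|K\|_1^2\|f\|_\infty\lambda^2/n$, and the $\lambda^{3/2}$-term yields $\tfrac{\theta}{3}\|K\|_2^2/(n\,\mathfrak h_n(\gamma))+\tfrac{3}{\theta}\|K\|_2^2\lambda^3/(n^2h_n(\gamma_{\max}))$ (absorbed in the stated bound via $\|K\|_2^2\leqslant\|K\|_1\|K\|_\infty$). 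In particular the $\lambda^3/(n^2h_n(\gamma_{\max}))$ term in the lemma does not come only from the sup-norm functional multiplied by $\lambda^2\leqslant\lambda^3$, as your sketch suggests, but also from this split of the mixed term. The fix is mechanical, but as written your absorption step fails.
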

\noindent
Lemmas \ref{bound_V}  and \ref{Lerasle_type_inequality} rely on Bernstein's inequality for non identically distributed variables.
%

% Lemma : A bound for V.

%
\begin{lemma}\label{bound_V}
There exists a deterministic constant $c > 0$, not depending on $n$, $\lambda$, $\theta$ and $\gamma$, such that for every $\gamma'\in\Gamma_n$, with probability larger than $1 - 2e^{-\lambda}$,
\begin{displaymath}
V_n(\gamma,\gamma') :=
\langle\widehat f_{n,\mathbf h_n(\gamma)} - f_{n,\gamma},
 f_{n,\gamma'} - f\rangle
 \end{displaymath}
satisfies 
\begin{displaymath}
|V_n(\gamma, \gamma')|
\leqslant
\theta\|f_{n,\gamma'} - f\|_{2}^{2} +\frac{c\lambda}{\theta n}.
\end{displaymath}
\end{lemma}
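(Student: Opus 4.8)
The plan is to expand the scalar product $V_n(\gamma,\gamma')$ into a centered sum of independent (but not identically distributed) random variables and then apply Bernstein's inequality. Write $g_{\gamma'} := f_{n,\gamma'} - f$, which is a fixed (deterministic) $\mathbb L^2$ function. Then
\begin{displaymath}
V_n(\gamma,\gamma') = \langle \widehat f_{n,\mathbf h_n(\gamma)} - f_{n,\gamma}, g_{\gamma'}\rangle
= \frac{1}{n}\sum_{k=1}^{n}\Bigl( \langle K_{h_k(\gamma)}(X_k-\cdot), g_{\gamma'}\rangle - \mathbb E\langle K_{h_k(\gamma)}(X_k-\cdot), g_{\gamma'}\rangle \Bigr),
\end{displaymath}
so that $V_n(\gamma,\gamma') = \frac{1}{n}\sum_{k=1}^n (Z_k - \mathbb E Z_k)$ with $Z_k := \langle K_{h_k(\gamma)}(X_k-\cdot), g_{\gamma'}\rangle = (K_{h_k(\gamma)}\ast g_{\gamma'})(X_k)$ after the usual change of variable, using that $K$ is symmetric (Assumption \ref{conditions_LMR}).

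Next I would collect the two ingredients Bernstein needs. For the sup-norm bound: $|Z_k| \leqslant \|K_{h_k(\gamma)}\ast g_{\gamma'}\|_\infty \leqslant \|K_{h_k(\gamma)}\|_2 \|g_{\gamma'}\|_2 = h_k(\gamma)^{-1/2}\|K\|_2 \|g_{\gamma'}\|_2$, and since $h_k(\gamma)\geqslant h_n(\gamma)\geqslant 1/n$ (using Assumption \ref{assumption_maps_h} together with $\gamma \leqslant \gamma_{\max}\leqslant h_n^{-1}(1/n)$), this is at most $\sqrt{n}\,\|K\|_2\|g_{\gamma'}\|_2$; crucially $\|g_{\gamma'}\|_2 = \|f_{n,\gamma'}-f\|_2$ is bounded by a constant (it is $\leqslant \|f_{n,\gamma'}\|_\infty^{1/2}\|\cdots\| $ type argument, or simply $\leqslant c$ by Young's inequality since $\|f_{n,\gamma'}\|_2 \leqslant \|K\|_1\|f\|_2$ and $f$ is bounded), so the envelope is of order $\sqrt n$. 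For the variance: $\mathrm{var}(Z_k) \leqslant \mathbb E[(K_{h_k(\gamma)}\ast g_{\gamma'})(X_k)^2] = \int (K_{h_k(\gamma)}\ast g_{\gamma'})^2 f \leqslant \|f\|_\infty \|K_{h_k(\gamma)}\ast g_{\gamma'}\|_2^2 \leqslant \|f\|_\infty \|K\|_1^2 \|g_{\gamma'}\|_2^2 = \|f\|_\infty\|K\|_1^2\|f_{n,\gamma'}-f\|_2^2$. So $v := \frac{1}{n}\sum_k \mathrm{var}(Z_k) \leqslant \|f\|_\infty\|K\|_1^2 \|g_{\gamma'}\|_2^2$.

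Bernstein's inequality for the sum $\frac1n\sum(Z_k-\mathbb E Z_k)$ then gives, with probability at least $1-2e^{-\lambda}$,
\begin{displaymath}
|V_n(\gamma,\gamma')| \leqslant \sqrt{\frac{2v\lambda}{n}} + \frac{b\lambda}{n}
\end{displaymath}
where $b$ is the envelope divided by $n$... more precisely, with envelope $M_1(n)=\sqrt n\|K\|_2\|g_{\gamma'}\|_2$ the deviation term is $\tfrac{M_1(n)\lambda}{3n}$, of order $\lambda/\sqrt n$. Now apply $\sqrt{ab}\leqslant \theta a + \frac{b}{4\theta}$ to the square-root term with $a = \|g_{\gamma'}\|_2^2$ and $b$ absorbing $\|f\|_\infty\|K\|_1^2\lambda/n$: this produces $\theta\|f_{n,\gamma'}-f\|_2^2$ plus a term of order $\lambda/(\theta n)$. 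The envelope term $\tfrac{M_1(n)\lambda}{3n}$ is of order $\lambda\|g_{\gamma'}\|_2/\sqrt n$, which we again split via Young as $\theta\|g_{\gamma'}\|_2^2 + c\lambda^2/(\theta n)$ — but since $\lambda^2/n$ is not allowed in the target bound, the cleaner route is to bound $\|g_{\gamma'}\|_2 \leqslant c$ outright so the envelope term is $\leqslant c\lambda/(\sqrt n) \leqslant c\lambda/(\theta n)$ only if $\sqrt n \geqslant n/\theta$, which fails. The resolution, and the main obstacle, is exactly this: one should not bound $\|g_{\gamma'}\|_2$ by a constant in the deviation term but instead keep it and note $M_1(n)\lambda/n \leqslant \sqrt n \|K\|_2\|g_{\gamma'}\|_2\lambda/n$; then Young's inequality $\sqrt n\|g_{\gamma'}\|_2 \cdot \lambda/n \leqslant \theta\|g_{\gamma'}\|_2^2 \cdot (\text{something}) + \ldots$ does not close either. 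The correct fix is that in Bernstein one may use a \emph{weaker} envelope: $|Z_k - \mathbb E Z_k|\leqslant 2\|K_{h_k(\gamma)}\ast g_{\gamma'}\|_\infty$, and one bounds $\|K_{h_k(\gamma)}\ast g_{\gamma'}\|_\infty \leqslant \|K_{h_k(\gamma)}\|_\infty\|g_{\gamma'}\|_1$ is worse; instead the LMR argument uses that $g_{\gamma'}=f_{n,\gamma'}-f$ and bounds things in terms of $\|f\|_\infty$ only, absorbing $\|g_{\gamma'}\|_2^2$ entirely into the variance term via $\sqrt{v\lambda/n}\leqslant \theta\|g_{\gamma'}\|_2^2 + \frac{c\|f\|_\infty\|K\|_1^2\lambda}{4\theta n}$ and treating the genuinely smaller-order envelope term $O(\lambda/\sqrt n)$ together with the $\theta$-free remainder — this is legitimate because $\lambda/\sqrt n \leqslant \lambda/n$ is false, so ultimately one absorbs $\|g_{\gamma'}\|_\infty \leqslant \|f_{n,\gamma'}\|_\infty + \|f\|_\infty \leqslant (\|K\|_1+1)\|f\|_\infty$ into the envelope, giving envelope $\leqslant c\|f\|_\infty$ (a constant!), whence the deviation term is genuinely $\leqslant c\lambda/n$. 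I expect the bookkeeping around which norm of $g_{\gamma'}$ to keep (to match the $\theta\|f_{n,\gamma'}-f\|_2^2 + c\lambda/(\theta n)$ shape exactly) to be the only delicate point; everything else is a direct application of Bernstein's inequality exactly as in the proof of Theorem 2 in Lacour \emph{et al.} \cite{LMR17}.
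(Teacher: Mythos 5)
Your final argument is correct and is essentially the paper's own proof: you write $V_n(\gamma,\gamma')$ as a centered sum of independent, non-identically distributed terms, bound the variance by $\|f\|_\infty\|K\|_1^2\|f_{n,\gamma'}-f\|_2^2$, and --- after correctly discarding the $\mathbb{L}^2$--$\mathbb{L}^2$ pairing, whose envelope of order $\sqrt n$ would only give a $\lambda/\sqrt n$ deviation term --- use the $\mathbb{L}^1$--$\mathbb{L}^\infty$ pairing $|\langle K_{h_k(\gamma)}(X_k-\cdot),f_{n,\gamma'}-f\rangle_2|\leqslant \|K\|_1(1+\|K\|_1)\|f\|_\infty$ to obtain a constant envelope, exactly as in the paper. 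Bernstein's inequality combined with $\sqrt{ab}\leqslant\theta a+b/(4\theta)$ then gives $|V_n(\gamma,\gamma')|\leqslant\theta\|f_{n,\gamma'}-f\|_2^2+c\lambda/(\theta n)$, so apart from the exploratory detour your route coincides with the paper's.
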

%

% Lemma : Lerasle et al. type inequality.

%
\begin{lemma}\label{Lerasle_type_inequality}
Under Assumption \ref{conditions_LMR}, there exists two deterministic constants $c_1,c_2 > 0$, not depending on $n$, $\lambda$, $\varepsilon$ and $\gamma$, such that with probability larger than $1 - c_1|\Gamma_n|e^{-\lambda}$,
\begin{displaymath}
\|f_{n,\gamma} - f\|_{2}^{2} +\frac{\|K\|_{2}^{2}}{n \,\mathfrak h_n(\gamma)}\leqslant
(1 +\varepsilon)\|\widehat f_{n,\mathbf h_n(\gamma)} - f\|_{2}^{2} +
c_2\frac{(1+\varepsilon)^2}{\varepsilon}\left(\frac{\lambda^2}{n} +
\frac{\lambda^3}{n^2h_n(\gamma_{\max})}\right).
\end{displaymath}
\end{lemma}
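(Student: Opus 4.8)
\textbf{Proof of Lemma \ref{Lerasle_type_inequality} (proposal).}
The plan is to feed the deterministic quantities into a bias–variance decomposition of the empirical squared error and then to lower-bound the stochastic part with the two preceding lemmas and one direct Bernstein estimate. Write $b_\gamma := f_{n,\gamma} - f$ for the integrated bias and $\chi_\gamma := \widehat f_{n,\mathbf h_n(\gamma)} - f_{n,\gamma}$ for the centred fluctuation, and set $v(\gamma) := \|K\|_2^2/(n\,\mathfrak h_n(\gamma))$ for the variance proxy. Starting from
\[
\|\widehat f_{n,\mathbf h_n(\gamma)} - f\|_2^2 = \|b_\gamma\|_2^2 + 2\langle \chi_\gamma, b_\gamma\rangle + \|\chi_\gamma\|_2^2 ,
\]
the lemma is equivalent to bounding $\|b_\gamma\|_2^2 + v(\gamma)$ by $(1+\varepsilon)$ times the left-hand side plus the announced remainder. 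The cross term is exactly $V_n(\gamma,\gamma)$, so Lemma \ref{bound_V} with $\gamma'=\gamma$ yields $|\langle\chi_\gamma,b_\gamma\rangle|\leqslant \theta\|b_\gamma\|_2^2 + c\lambda/(\theta n)$; the $\theta\|b_\gamma\|_2^2$ piece is to be absorbed later by the $\varepsilon$-margin on the bias.

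The heart of the proof is a lower bound for the empirical variance $\|\chi_\gamma\|_2^2$. I would split it into diagonal and off-diagonal parts,
\[
\|\chi_\gamma\|_2^2 = \frac{1}{n^2}\sum_{k=1}^n\|K_{h_k(\gamma)}(X_k-\cdot) - f_{h_k(\gamma)}\|_2^2 + \frac{1}{n^2}U_n(\gamma,\gamma),
\]
and control $U_n(\gamma,\gamma)$ by Lemma \ref{U_statistic_bound}, whose proof applies verbatim to this symmetric $U$-statistic and returns the same bound with $h_n(\gamma)$ in place of $h_n(\gamma_{\max})$; since $h_n(\gamma)\geqslant h_n(\gamma_{\max})$ by monotonicity, this only improves the estimate. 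A short computation using $\|K_{h_k(\gamma)}(X_k-\cdot)\|_2^2 = \|K\|_2^2/h_k(\gamma)$ shows the diagonal term has expectation $v(\gamma) - n^{-2}\sum_k\|f_{h_k(\gamma)}\|_2^2$, the subtracted quantity being $O(1/n)$ (bounded by $\|f\|_\infty\|K\|_1/n$) and hence absorbable into the remainder.

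The remaining and main obstacle is the deviation of the diagonal term from its mean, which I would handle by Bernstein's inequality applied to the independent summands $Z_k := \|K_{h_k(\gamma)}(X_k-\cdot) - f_{h_k(\gamma)}\|_2^2$. The needed estimates are $\|Z_k\|_\infty\leqslant 2\|K\|_2^2/h_k(\gamma) + 2\|f\|_\infty\|K\|_1 \lesssim 1/h_k(\gamma)$ (using $h_k(\gamma)\leqslant 1$) and $\mathrm{var}(Z_k)\leqslant \|Z_k\|_\infty\,\mathbb E(Z_k)\lesssim \|K\|_2^2/h_k(\gamma)^2$, whence $\sum_k\mathrm{var}(Z_k)\lesssim \|K\|_2^2\,h_n(\gamma)^{-1}\sum_k h_k(\gamma)^{-1}$. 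After division by $n^2$ the sub-Gaussian part of Bernstein becomes of the form $\sqrt{2C\,v(\gamma)\cdot\lambda/(n^2h_n(\gamma))}$, and a Young split $\sqrt{2Cab}\leqslant \theta a + (C/2\theta)b$ turns it into $\theta v(\gamma)$ plus a remainder of order $\lambda/(n^2h_n(\gamma))\leqslant \lambda^3/(n^2h_n(\gamma_{\max}))$; the sub-exponential part is directly of order $\lambda/(n^2h_n(\gamma))$. The delicate point here is to tie the Young parameter to the same $\theta$ used elsewhere so that all $\theta v(\gamma)$-contributions accumulate controllably.

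Finally I would collect the three $\theta v(\gamma)$-type contributions (from Lemmas \ref{U_statistic_bound}, \ref{bound_V} and the Bernstein split) and choose $\theta = \varepsilon/(2(1+\varepsilon))$, so that $(1+\varepsilon)(1-2\theta)\geqslant 1$ and the margins on both $\|b_\gamma\|_2^2$ and $v(\gamma)$ exactly soak up these contributions, leaving $\|b_\gamma\|_2^2 + v(\gamma)$ on the correct side. Since $1/\theta\asymp (1+\varepsilon)/\varepsilon$, each additive remainder carries the prefactor $(1+\varepsilon)^2/\varepsilon$, and after using $\lambda\leqslant\lambda^2\leqslant\lambda^3$, $h_n(\gamma)\geqslant h_n(\gamma_{\max})$ and $n^{-2}\sum_k\|f_{h_k(\gamma)}\|_2^2\leqslant\|f\|_\infty\|K\|_1/n$, the total takes the claimed form $c_2\,\frac{(1+\varepsilon)^2}{\varepsilon}\bigl(\lambda^2/n + \lambda^3/(n^2h_n(\gamma_{\max}))\bigr)$. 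A union bound over $\Gamma_n$ of the events from Lemma \ref{U_statistic_bound}, Lemma \ref{bound_V} (with $\gamma'=\gamma$) and the Bernstein step gives the probability $1-c_1|\Gamma_n|e^{-\lambda}$. $\quad\Box$
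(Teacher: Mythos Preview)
Your proposal is correct and follows essentially the same route as the paper's proof: the identical bias--fluctuation decomposition, the same use of Lemma~\ref{bound_V} on the cross term $V_n(\gamma,\gamma)$, the same splitting of $\|\chi_\gamma\|_2^2$ into a diagonal sum plus the $U$-statistic $U_n(\gamma,\gamma)$, Bernstein's inequality on the very same summands $Z_k=Y_k(\gamma)=\|K_{h_k(\gamma)}(X_k-\cdot)-f_{h_k(\gamma)}\|_2^2$, and Lemma~\ref{U_statistic_bound} for the off-diagonal part. The only differences are cosmetic bookkeeping: the paper packages $v(\gamma)-\|\chi_\gamma\|_2^2$ as a single quantity $\Lambda_n(\gamma)$ and sets $1/(1-\theta)=1+\varepsilon$, whereas you track two separate $\theta v(\gamma)$ contributions and take $\theta=\varepsilon/(2(1+\varepsilon))$; also, the bound on $n^{-2}\sum_k\|f_{h_k(\gamma)}\|_2^2$ should carry $\|K\|_1^2$ rather than $\|K\|_1$, which is harmless.
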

%

% Proof.

%
%
\noindent
The proof of Proposition \ref{LMR_method} is dissected in three steps.
\\

\noindent \textbf{Step 1.} In this step, a suitable decomposition of
\begin{displaymath}
\|\widehat f_{n,\mathbf h_n(\widetilde\gamma_n)} - f\|_{2}^{2}
\end{displaymath}
is provided. On the one hand,
\begin{eqnarray*}
 \|\widehat f_{n,\mathbf h_n(\widetilde\gamma_n)} - f\|_{2}^{2} +\textrm{pen}(\widetilde\gamma_n) & = &
 \|\widehat f_{n,\mathbf h_n(\widetilde\gamma_n)} -\widehat f_{n,\mathbf h_n(\gamma_{\max})}\|_{2}^{2}
 +\textrm{pen}(\widetilde\gamma_n)\\
 & &
 +\|\widehat f_{n,\mathbf h_n(\gamma_{\max})} - f\|_{2}^{2}\\
 & &
 - 2\langle\widehat f_{n,\mathbf h_n(\gamma_{\max})} - f,
 \widehat f_{n,\mathbf h_n(\gamma_{\max})} -\widehat f_{n,\mathbf h_n(\widetilde\gamma_n)}\rangle_2.
\end{eqnarray*}
Since $\widetilde\gamma_n\in\arg\min_{\gamma\in\Gamma_n}\textrm{Crit}(\gamma)$, for any $\gamma\in \Gamma_n$, 
\begin{eqnarray}
 \|\widehat f_{n,\mathbf h_n(\widetilde\gamma_n)} - f\|_{2}^{2} & \leqslant &
 \|\widehat f_{n,\mathbf h_n(\gamma)} -\widehat f_{n,\mathbf h_n(\gamma_{\max})}\|_{2}^{2}
 +\textrm{pen}(\gamma)\nonumber\\
 & &
 -\textrm{pen}(\widetilde\gamma_n)
 +\|\widehat f_{n,\mathbf h_n(\gamma_{\max})} - f\|_{2}^{2}\nonumber\\
 & &
 - 2\langle\widehat f_{n,\mathbf h_n(\gamma_{\max})} - f,
 \widehat f_{n,\mathbf h_n(\gamma_{\max})} -\widehat f_{n,\mathbf h_n(\widetilde\gamma_n)}\rangle_2\nonumber\\
 & = &
 \|\widehat f_{n,\mathbf h_n(\gamma)} - f\|_{2}^{2}\nonumber\\
 & &
 -[\textrm{pen}(\widetilde\gamma_n)
 - 2\|\widehat f_{n,\mathbf h_n(\gamma_{\max})} - f\|_{2}^{2}\nonumber\\
 & &
 + 2\langle\widehat f_{n,\mathbf h_n(\gamma_{\max})} - f,
 \widehat f_{n,\mathbf h_n(\gamma_{\max})} -\widehat f_{n,\mathbf h_n(\widetilde\gamma_n)}\rangle_2]\nonumber\\
 & &
 +\textrm{pen}(\gamma)
 - 2\langle\widehat f_{n,\mathbf h_n(\gamma_{\max})} - f,\widehat f_{n,\mathbf h_n(\gamma)} - f\rangle_2\nonumber\\
 \label{Oracle_inequality_1}
 & = &
 \|\widehat f_{n,\mathbf h_n(\gamma)} - f\|_{2}^{2} +
 \textrm{pen}(\gamma) - 2\psi_n(\gamma)
 -(\textrm{pen}(\widetilde\gamma_n) - 2\psi_n(\widetilde\gamma_n))
\end{eqnarray}
with
\begin{displaymath}
\psi_n :=\langle\widehat f_{n,\mathbf h_n(\gamma_{\max})} - f,
\widehat f_{n,\mathbf h_n(.)} - f\rangle_2.
\end{displaymath}
On the other hand,
\begin{eqnarray*}
 \psi_n(\gamma) & = &
 \langle\widehat f_{n,\mathbf h_n(\gamma_{\max})} - f_{n,\gamma_{\max}},
 \widehat f_{n,\mathbf h_n(\gamma)} - f_{n,\gamma}\rangle_2 
 +\langle\widehat f_{n,\mathbf h_n(\gamma_{\max})} - f_{n,\gamma_{\max}},
 f_{n,\gamma} - f\rangle_2 \\ &&
 +\langle f_{n,\gamma_{\max}} - f,
 \widehat f_{n,\mathbf h_n(\gamma)} - f_{n,\gamma}\rangle_2
 +\langle  f_{n,\gamma_{\max}} - f,
  f_{n,\gamma} - f\rangle_2\\
 & = &
 \psi_{1,n}(\gamma) +
 \psi_{2,n}(\gamma) +
 \psi_{3,n}(\gamma),
\end{eqnarray*}
where
\begin{eqnarray*}
 \psi_{1,n}(\gamma) & := &
 \frac{1}{n^2}\sum_{k = 1}^{n}\langle K_{h_k(\gamma)},K_{h_k(\gamma_{\max})}\rangle_2 +
 \frac{U_n(\gamma,\gamma_{\max})}{n^2}\textrm{, }\\
 \psi_{2,n}(\gamma) & := &
 \frac{1}{n^2}\left(-\sum_{k = 1}^{n}\langle K_{h_k(\gamma)}(X_k -\cdot),f_{h_k(\gamma_{\max})}\rangle_2\right.\\
 & &
 \left.-\sum_{k = 1}^{n}\langle K_{h_k(\gamma_{\max})}(X_k -\cdot),f_{h_k(\gamma)}\rangle_2
 +\sum_{k = 1}^{n}\langle f_{h_k(\gamma)},f_{h_k(\gamma_{\max})}\rangle_2\right)\textrm{ and}\\
 \psi_{3,n}(\gamma) & := &
 V_n(\gamma,\gamma_{\max}) +
 V_n(\gamma_{\max},\gamma) +\langle f_{n,\gamma} - f,f_{n,\gamma_{\max}} - f\rangle_2
\end{eqnarray*}
\textbf{Step 2.} Some bounds for $\psi_{n,1}(\gamma)$, $\psi_{n,2}(\gamma)$ and $\psi_{n,3}(\gamma)$ are provided in this step.
\begin{enumerate}
 \item Consider
 \begin{displaymath}
 \widetilde\psi_{1,n}(\gamma) :=
 \psi_{1,n}(\gamma) -\frac{1}{n^2}\sum_{k = 1}^{n}\langle K_{h_k(\gamma)},K_{h_k(\gamma_{\max})}\rangle_2.
 \end{displaymath}
 By Lemma \ref{U_statistic_bound}, with probability larger than $1 - 5.54|\Gamma_n|e^{-\lambda}$,
 \begin{eqnarray*}
  |\widetilde\psi_{1,n}(\gamma)|
  & = &
  \frac{|U_n(\gamma,\gamma_{\max})|}{n^2}\\
  & \leqslant &
  \frac{\theta\|K\|_{2}^{2}}{n \,\mathfrak h_n(\gamma)} +
  \frac{\mathfrak c}{\theta}\left(\frac{\|K\|_{1}^{2}\|f\|_{\infty}}{n}\lambda^2 +
  \frac{\|K\|_{\infty}\|K\|_1}{n^2h_n(\gamma_{\max})}\lambda^3\right).
 \end{eqnarray*}
 \item On the one hand, for any $\gamma'\in\Gamma_n$,
 \begin{eqnarray*}
  \frac{1}{n}\left|\sum_{k = 1}^{n}
  \langle K_{h_k(\gamma)}(X_k -\cdot),f_{h_k(\gamma')}\rangle_2\right| & \leqslant &
  \max_{k\in\{1, \dots, n\}}
  \int_{-\infty}^{\infty}|K_{h_k(\gamma)}(X_k - x)f_{h_k(\gamma')}(x)|dx\\
  & \leqslant &
  \|K\|_1\max_{k\in\{1, \dots, n\}}
  \|K_{h_k(\gamma')}\ast f\|_{\infty}
  \leqslant\|K\|_{1}^{2}\|f\|_{\infty}.
 \end{eqnarray*}
 On the other hand,
 \begin{eqnarray*}
  \frac{1}{n}\left|\sum_{k = 1}^{n}
  \langle f_{h_k(\gamma)},f_{h_k(\gamma')}\rangle_2\right| & \leqslant &
  \max_{k\in\{1, \dots, n\}}
  \int_{-\infty}^{\infty}|f_{h_k(\gamma)}(x)f_{h_k(\gamma')}(x)|dx\\
  & \leqslant &
  \max_{k\in\{1, \dots, n\}}
  \|K_{h_k(\gamma)}\ast f\|_1
  \|K_{h_k(\gamma')}\ast f\|_{\infty}
  \leqslant\|K\|_{1}^{2}\|f\|_{\infty}.
 \end{eqnarray*}
 Therefore,
 \begin{displaymath}
 \|\psi_{2,n}\|_{\infty}
 \leqslant
 \frac{3\|K\|_{1}^{2}\|f\|_{\infty}}{n}.
 \end{displaymath}
 \item By applying Lemma \ref{bound_V} to $V_n(\gamma,\gamma_{\max})$ and $V_n(\gamma_{\max},\gamma)$, with probability larger than $1-2e^{-\lambda}$, 
 \begin{eqnarray*}
  |\psi_{n,3}(\gamma)| & \leqslant &
  \frac{\theta}{2}(\|f_{n,\gamma} - f\|_{2}^{2} +\|f_{n,\gamma_{\max}} - f\|_{2}^{2}) +
  \frac{\kappa_1\lambda}{\theta n}\\
  & &
  +\frac{\theta}{2}\|f_{n,\gamma} - f\|_{2}^{2} +
  \frac{1}{2\theta}\|f_{n,\gamma_{\max}} - f\|_{2}^{2}\\
  & \leqslant &
  \theta\|f_{n,\gamma} - f\|_{2}^{2} +\left(\frac{\theta}{2} +\frac{1}{2\theta}\right)\|f_{n,\gamma_{\max}} - f\|_{2}^{2} +
  \frac{\kappa_1\lambda}{\theta n}.
 \end{eqnarray*}
\end{enumerate}
\textbf{Step 3.} Consider
\begin{displaymath}
\widetilde\psi_n(\gamma) :=
\psi_n(\gamma) -\frac{1}{n^2}\sum_{k = 1}^{n}\langle K_{h_k(\gamma)},K_{h_k(\gamma_{\max})}\rangle_2.
\end{displaymath}
By Step 2 and Lemma \ref{Lerasle_type_inequality}, with probability larger than $1 - \kappa_2|\Gamma_n|e^{-\lambda}$,
\begin{eqnarray*}
 |\widetilde\psi_n(\gamma)| & \leqslant &
 \theta\|f_{n,\gamma} - f\|_{2}^{2} +
 \frac{\theta\|K\|_{2}^{2}}{n \,\mathfrak h_n(\gamma)}\\
 & &
 +\left(\frac{\theta}{2} +\frac{1}{2\theta}\right)\|f_{n,\gamma_{\max}} - f\|_{2}^{2} +
 \frac{\kappa_3}{\theta}\left(\frac{\lambda^2}{n} +
 \frac{\lambda^3}{n^2h_n(\gamma_{\max})}\right)\\
 & \leqslant &
 2\theta\|\widehat f_{n,\mathbf h_n(\gamma)} - f\|_{2}^{2}\\
 & &
 +\left(\frac{\theta}{2} +\frac{1}{2\theta}\right)\|f_{n,\gamma_{\max}} - f\|_{2}^{2} +
 \frac{\kappa_4}{\theta}\left(\frac{\lambda^2}{n} +
 \frac{\lambda^3}{n^2h_n(\gamma_{\max})}\right).
\end{eqnarray*}
Therefore, choosing $\theta$ as in Lemma \ref{Lerasle_type_inequality} ($1/(1-\theta)=1+\varepsilon$), by Inequality (\ref{Oracle_inequality_1}), with probability larger than $1 - \kappa_5|\Gamma_n|e^{-\lambda}$,
\begin{eqnarray*}
 \|\widehat f_{n,\mathbf h_n(\widetilde\gamma_n)} - f\|_{2}^{2}
 & \leqslant &
 (1 +\varepsilon)\|\widehat f_{n,\mathbf h_n(\gamma)} - f\|_{2}^{2}
 +\frac{\kappa_6}{\varepsilon}\|f_{n,\gamma_{\max}} - f\|_{2}^{2}\\
 & &
 +\normalfont{\textrm{pen}}(\gamma) -\frac{2}{n^2}\sum_{k = 1}^{n}\langle
 K_{h_k(\gamma)},K_{h_k(\gamma_{\max})}\rangle_2\\
 & &
 -\left(\normalfont{\textrm{pen}}(\widetilde\gamma_n) -\frac{2}{n^2}\sum_{k = 1}^{n}\langle
 K_{h_k(\widetilde\gamma_n)},K_{h_k(\gamma_{\max})}\rangle_2\right)\\
 & &
 +\frac{\kappa_7}{\varepsilon}\left(\frac{\lambda^2}{n} +\frac{\lambda^3}{n^2h_n(\gamma_{\max})}\right)\\
 & = &
 (1 +\varepsilon)\|\widehat f_{n,\mathbf h_n(\gamma)} - f\|_{2}^{2}
 +\frac{\kappa_6}{\varepsilon}\|f_{n,\gamma_{\max}} - f\|_{2}^{2}\\
 & &
 +\frac{\kappa_7}{\varepsilon}\left(\frac{\lambda^2}{n} +\frac{\lambda^3}{n^2h_n(\gamma_{\max})}\right).
\end{eqnarray*}
This concludes the proof. $\Box$
%

% Subsubsection : Proof of Lemma U_statistic_bound.

%
\subsubsection{Proof of Lemma \ref{U_statistic_bound}}
Consider
\begin{displaymath}
\Delta_n :=
\{(k,l)\in\mathbb N^2 :
2\leqslant k\leqslant n
\textrm{ and }
1\leqslant l\leqslant k - 1\}.
\end{displaymath}
The $U$-statistic satisfies
\begin{displaymath}
U_n(\gamma,\gamma_{\max}) =
\sum_{k = 2}^{n}\sum_{l < k}
(G_{\gamma,\gamma_{\max}}^{k,l}(X_k,X_l) +
G_{\gamma_{\max},\gamma}^{k,l}(X_k,X_l)),
\end{displaymath}
where
\begin{displaymath}
G_{a,b}^{k,l}(\alpha,\beta) :=
\langle K_{h_k(a)}(\alpha -\cdot) - f_{h_k(a)}
,K_{h_l(b)}(\beta -\cdot) - f_{h_l(b)}
\rangle_2
\end{displaymath}
for every $(k,l)\in\Delta_n$, $a,b\in\{\gamma,\gamma_{\max}\}$ and $(\alpha,\beta)\in\mathbb R^2$.
\\
\\
By Houdr\'e and Reynaud-Bourret \cite{HRB03}, Theorem 3.4, there exists a universal constant $\mathfrak c > 0$ such that
\begin{equation}\label{RH}
 \mathbb P(|U_n(\gamma,\gamma_{\max})|
 \geqslant\mathfrak c(C\sqrt\lambda + D\lambda + B\lambda^{3/2} + A\lambda^2))
 \leqslant 5.54e^{-\lambda}
\end{equation}
where the constants $A$, $B$, $C$ and $D$ will be defined and controlled in the sequel. 
\begin{itemize}
 \item\textbf{The constant $A$.} Consider
 \begin{displaymath}
 A :=\max_{(k,l)\in\Delta_n}\sup_{(\alpha,\beta)\in\mathbb R^2}
 A_{k,l}(\alpha,\beta)
 \end{displaymath}
 with
 \begin{displaymath}
 A_{k,l}(\alpha,\beta) :=
 |G_{\gamma,\gamma_{\max}}^{k,l}(\alpha,\beta) +
 G_{\gamma_{\max},\gamma}^{k,l}(\alpha,\beta)|
 \textrm{ $;$ }
 \forall (k,l)\in\Delta_n\textrm{, }
 \forall (\alpha,\beta)\in\mathbb R^2.
 \end{displaymath}
 For any $(k,l)\in\Delta_n$ and $(\alpha,\beta)\in\mathbb R^2$,
 \begin{eqnarray*}
  A_{k,l}(\alpha,\beta)
  & \leqslant &
  |\langle K_{h_k(\gamma)}(\alpha -\cdot) - f_{h_k(\gamma)},
  K_{h_l(\gamma_{\max})}(\beta -\cdot) - f_{h_l(\gamma_{\max})}\rangle_2|\\
  & &
  + |\langle K_{h_k(\gamma_{\max})}(\alpha -\cdot) - f_{h_k(\gamma_{\max})},
  K_{h_l(\gamma)}(\beta -\cdot) - f_{h_l(\gamma)}\rangle_2|\\
  & \leqslant &
  2(\|K_{h_k(\gamma_{\max})}\|_{\infty} +\|f_{h_k(\gamma_{\max})}\|_{\infty})(\|K\|_1 +\|f_{h_l(\gamma)}\|_1)\\
  & \leqslant &
  8\frac{
  \|K\|_1\|K\|_{\infty}}{h_n(\gamma_{\max})}.
 \end{eqnarray*}
 Therefore,
 \begin{displaymath}
 \frac{A\lambda^2}{n^2}
 \leqslant
 8\frac{\|K\|_1\|K\|_{\infty}}{n^2h_n(\gamma_{\max})}\lambda^2.
 \end{displaymath}
 \item\textbf{The constant $B$.} Consider
 \begin{displaymath}
 B^2 := 
 \max\left\{
 \sup_{\alpha,l}
 \sum_{k = 1}^{l - 1} 
 \mathbb E(|G_{\gamma,\gamma_{\max}}^{k,l}(\alpha,X_l)|^2)
 \textrm{ $;$ }
 \sup_{\alpha,l}
 \sum_{k = l + 1}^{n}
 \mathbb E(
 |G_{\gamma_{\max},\gamma}^{k,l}(\alpha,X_l)|^2)\right\}.
 \end{displaymath}
 For any $(k,l)\in\Delta_n$, $a,b\in\{\gamma,\gamma_{\max}\}$ and $(\alpha,\beta)\in\mathbb R^2$,
 \begin{eqnarray*}
  \mathbb E(G_{a,b}^{k,l}(\alpha,X_l)^2)
  & = &
  \mathbb E(\langle K_{h_k(a)}(\alpha -\cdot) - f_{h_k(a)},
  K_{h_l(b)}(X_l -\cdot) - f_{h_l(b)}\rangle_{2}^{2})\\
  & \leqslant &
  \|K_{h_k(a)}(\alpha -\cdot) - f_{h_k(a)}\|_{2}^{2}
  \mathbb E(\|K_{h_l(b)}(X_l -\cdot) - f_{h_l(b)}\|_{2}^{2})\\
  & \leqslant &
  4\frac{\|K\|_{2}^{2}}{h_k(a)}
  \int_{-\infty}^{\infty}\mathbb E(
  |K_{h_l(b)}(X_l - y) - f_{h_l(b)}(y)|^2)dy\\
  & \leqslant &
  4\frac{\|K\|_{2}^{2}}{h_n(a)}   \|K_{h_l(b)}\|_{2}^{2}
  \leqslant
  4\frac{\|K\|_{2}^{4}}{h_k(a)h_l(b)} \leqslant 
  4\frac{\|K\|_{2}^{4}}{h_k(a)h_n(b)}.
 \end{eqnarray*}
 Then,
 \begin{displaymath}
 B^2 \leqslant 4\frac{\|K\|_{2}^{4}}{h_n(\gamma_{\max})} 
 \sum_{k = 1}^{n}
 \frac{1}{h_k(\gamma)}.
 \end{displaymath}
 Therefore,
 \begin{eqnarray*}
  \frac{B\lambda^{3/2}}{n^2} & \leqslant &
 2\left(\frac{\theta}{3}\right)^{1/2}
 \|K\|_2
 \sqrt{\frac{1}{n^2}
 \sum_{k = 1}^{n}
 \frac{1}{h_k(\gamma)}}
 \times
 \left(\frac{3}{\theta}\right)^{1/2}
 \frac{\|K\|_2}{(n^2h_n(\gamma_{\max}))^{1/2}}\lambda^{3/2}\\
 & \leqslant &
 \frac{\theta\|K\|_{2}^{2}}{3n \,\mathfrak h_n(\gamma)}
 +\frac{3}{\theta}\times
 \frac{\|K\|_{2}^{2}}{ n^2h_n(\gamma_{\max})}\lambda^3.
 \end{eqnarray*}
 \item\textbf{The constant $C$.} Consider
 \begin{displaymath}
 C^2 :=\sum_{(k,l)\in\Delta_n}
 \mathbb E((G_{\gamma,\gamma_{\max}}^{k,l}(X_k,X_l) +
 G_{\gamma_{\max},\gamma}^{k,l}(X_k,X_l))^2).
 \end{displaymath}
  For any $(k,l)\in\Delta_n$ and $a,b\in\{\gamma,\gamma_{\max}\}$,
  \begin{eqnarray*}
   \mathbb E(G_{a,b}^{k,l}(X_k,X_l)^2) & = &
   \mathbb E(\langle K_{h_k(a)}(X_k -\cdot) - f_{h_k(a)},
   K_{h_l(b)}(X_l -\cdot) - f_{h_l(b)}\rangle_{2}^{2})\\
   & \leqslant &
   \kappa_1(\mathbb E(\langle K_{h_k(a)}(X_k -\cdot),K_{h_l(b)}(X_l -\cdot)\rangle_{2}^{2})\\
   & &
   +\|f_{h_l(b)}\|_{\infty}^{2}\|K\|_{1}^{2}
   +\|f_{h_k(a)}\|_{\infty}^{2}\|K\|_{1}^{2}
   +\|f_{h_k(a)}\|_{\infty}^{2}\|f_{h_l(b)}\|_{1}^{2})\\
   & \leqslant &
   \kappa_2\left(\mathbb E\left(\left|
   \int_{-\infty}^{\infty}K_{h_k(a)}(X_k - x)K_{h_l(b)}(X_l - x)dx\right|^2\right) +
   \|f\|_{\infty}^{2}\|K\|_{1}^{4}
   \right).
  \end{eqnarray*}
  Moreover,
  \begin{eqnarray*}
   \mathbb E\left(\left|
   \int_{-\infty}^{\infty}K_{h_k(a)}(X_k - x)K_{h_l(b)}(X_l - x)dx\right|^2\right)
   & \leqslant &
   \frac{\|K\|_{1}^{2}\|K\|_{2}^{2}\|f\|_{\infty}}{h_k(a)}.
  \end{eqnarray*}
  Then,
  \begin{displaymath}
  C\leqslant
  \kappa_3\sqrt{n}\|K\|_1\|f\|_{\infty}^{1/2}
  \left(\|K\|_2
  \sqrt{\sum_{k = 1}^{n}\frac{1}{h_k(\gamma)}}
  +\|K\|_1\|f\|_{\infty}^{1/2}\right).
  \end{displaymath}
  Therefore, since $\lambda\in [1,\infty[$,
  \begin{displaymath}
  \frac{C\lambda^{1/2}}{n^2} 
  \leqslant
  \frac{\theta\|K\|_{2}^{2}}{3n \,\mathfrak h_n(\gamma)} +
  \kappa_4\frac{\|K\|_{1}^{2}\|f\|_{\infty}}{\theta n}\lambda.
  \end{displaymath}
  \item\textbf{The constant $D$.} Consider
  \begin{displaymath}
  D :=\sup_{(a,b)\in\mathcal S}
  \sum_{k = 2}^{n}\sum_{l = 1}^{k - 1}
  \mathbb E((G_{\gamma,\gamma_{\max}}^{k,l}(X_k,X_l) +
  G_{\gamma_{\max},\gamma}^{k,l}(X_k,X_l))a_k(X_k)b_l(X_l)),
  \end{displaymath}
  where
  \begin{displaymath}
  \mathcal S :=
  \left\{(a,b) :
  \sum_{k = 2}^{n}
  \mathbb E(a_k(X_k)^2)\leqslant 1
  \textrm{ and }
  \sum_{l = 1}^{n - 1}
  \mathbb E(b_l(X_l)^2)\leqslant 1
  \right\}.
  \end{displaymath}
  For any $(a,b)\in\mathcal S$,
  \begin{displaymath}
  \sum_{k = 2}^{n}
  \sum_{l = 1}^{k - 1}\mathbb E(G_{\gamma,\gamma_{\max}}^{k,l}(X_k,X_l)a_k(X_k)b_l(X_l))
  \leqslant
  D_2(a,b)\sup_{x\in\mathbb R}D_1(a,b,x)
  \end{displaymath}
  with
  \begin{eqnarray*}
   D_1(a,b,x) & := &
   \sum_{k = 2}^{n}\mathbb E(
   |a_k(X_k)(K_{h_k(\gamma)}(X_k - x) - f_{h_k(\gamma)}(x))|)\\
   & \leqslant &
   \mathbb E\left[\left|\sum_{k = 2}^{n}
   a_k(X_k)^2\right|^{1/2} 
   \left|\sum_{k=1}^n (K_{h_k(\gamma)}(X_k - x) - f_{h_k(\gamma)}
   (x))^2\right|^{1/2}\right]\\
   & \leqslant &
   \mathbb E\left(\sum_{k = 2}^{n}
   a_k(X_k)^2\right)^{1/2}\left|\sum_{k = 1}^{n}
   \mathbb E(|K_{h_k(\gamma)}(X_k - x) - 
   f_{h_k(\gamma)}(x)|^2)\right|^{1/2}\\
   & \leqslant &
   \left|\sum_{k = 1}^{n}
   \mathbb E(K_{h_k(\gamma)}(X_k - x)^2)\right|^{1/2}
   \leqslant
   \left|\sum_{k = 1}^{n}
   \frac{\|K\|_{2}^{2}\|f\|_{\infty}}{h_k(\gamma)}\right|^{1/2}
  \end{eqnarray*}
  and
  \begin{eqnarray*}
   D_2(a,b) & := &
   \sum_{l = 1}^{n - 1}\mathbb E\left(|b_l(X_l)|\int_{-\infty}^{\infty}|K_{h_l(\gamma_{\max})}(X_l - x) - f_{h_l(\gamma_{\max})}(x)|dx\right)\\
   & \leqslant &
   2\|K\|_1\sum_{l = 1}^{n - 1}\mathbb E(|b_l(X_l)|)
   \leqslant
   2\|K\|_1\sqrt n\left|
   \sum_{l = 1}^{n - 1}\mathbb E(b_l(X_l)^2)\right|^{1/2}
   \leqslant 2\sqrt n\|K\|_1.
  \end{eqnarray*}
  Then, 
  \begin{displaymath}
  D\leqslant 2\sqrt{n}\|f\|_{\infty}^{1/2}\|K\|_1\|K\|_2
  \left|\sum_{k = 1}^{n} \frac 1{h_k(\gamma)}\right|^{1/2}.
  \end{displaymath}
  Therefore, 
  \begin{displaymath}
  \frac{D\lambda}{n^2}
  \leqslant
  \frac{\theta\|K\|_{2}^{2}}{3n \,\mathfrak h_n(\gamma)} +
  \frac{12}{\theta}\times\frac{\|K\|_{1}^{2}\|f\|_{\infty}}{n}\lambda^2.
  \end{displaymath} 
\end{itemize}
Plugging the bounds obtained for $A, B, C, D$ in Inequality (\ref{RH}) gives the announced result and ends the proof. $\Box$
%

% Subsubsection : Proof of Lemma bound_V.

%
\subsubsection{Proof of Lemma \ref{bound_V}}
For any $\gamma'\in\Gamma_n$,
\begin{displaymath}
V_n(\gamma,\gamma') =
\frac{1}{n}
\sum_{k = 1}^{n}(g_{\gamma'}(h_k(\gamma),X_k) -\mathbb E(g_{\gamma'}(h_k(\gamma),X_k)))
\end{displaymath}
where, for any $k\in\{1, \dots, n\}$,
\begin{displaymath}
g_{\gamma'}(h_k(\gamma),X_k) :=
\langle K_{h_k(\gamma)}(X_k -\cdot),f_{n,\gamma'} - f\rangle_2.
\end{displaymath}
Indeed,
\begin{displaymath}
\mathbb E(g_{\gamma'}(h_k(\gamma),X_k)) =
\langle\mathbb E(K_{h_k(\gamma)}(X_k -\cdot)),f_{n,\gamma'} - f\rangle_2 =
\langle f_{h_k(\gamma)},f_{n,\gamma'} - f\rangle_2.
\end{displaymath}
In order to apply Bernstein's inequality to $g_{\gamma'}(h_1(\gamma),X_1),\dots,g_{\gamma'}(h_n(\gamma),X_n)$, let us find suitable controls of
\begin{displaymath}
c_{\gamma'} :=\frac{\|g_{\gamma'}\|_{\infty}}{3}
\textrm{ and }
\upsilon_n(\gamma,\gamma') :=
\frac{1}{n}\sum_{k = 1}^{n}\mathbb E(g_{\gamma'}(h_k(\gamma),X_k)^2).
\end{displaymath}
On the one hand,
\begin{eqnarray*}
 c_{\gamma'} & = &
 \frac{1}{3}\sup_{h > 0,x\in\mathbb R}
 |\langle K_h(x -\cdot),f_{n,\gamma'} - f\rangle_2|\\
 & \leqslant &
 \frac{1}{3}  \sup_{h > 0,x\in\mathbb R}
 \|K_h(x -\cdot)\|_1 \|f_{n,\gamma'} - f\|_{\infty} \\
 & \leqslant &
 \frac{1}{3}
 \|K\|_1\max_{k\in\{1, \dots, n\}}\|K_{h_k(\gamma')}\ast f - f\|_{\infty}\\
 & \leqslant &
 \frac{1}{3}
 \|K\|_1(1 +\|K\|_1)\|f\|_{\infty}
 \leqslant
 \frac 23 \|K\|_1^2 \|f\|_\infty,
\end{eqnarray*}
as $\|K\|_1\geqslant 1$. On the other hand,
\begin{eqnarray*}
 \upsilon_n(\gamma,\gamma') & = &
 \frac{1}{n}\sum_{k = 1}^{n}
 \int_{-\infty}^{\infty}
 \left(\int_{-\infty}^{\infty}K_{h_k(\gamma)}(y - x)(f_{n,\gamma'}(x) - f(x))dx\right)^2f(y)dy\\
 & \leqslant &
 \|f\|_{\infty}\max_{k\in\{1, \dots, n\}}
 \|K_{h_k(\gamma)}\ast(f_{n,\gamma'} - f)\|_{2}^{2}
 \leqslant
 \|f\|_{\infty}\|K\|_{1}^{2}\|f_{n,\gamma'} - f\|_{2}^{2}.
\end{eqnarray*}
Then, by Bernstein's inequality, with probability larger than $1 - 2e^{-\lambda}$,
\begin{eqnarray*}
 |V_n(\gamma,\gamma')| & \leqslant &
 \sqrt{\frac{2\lambda}{n}\upsilon_n(\gamma,\gamma')} +
 \frac{c_{\gamma'}\lambda}{n}\\
 & \leqslant &
 \sqrt{\frac{2\lambda}{n}\|f\|_{\infty}\|K\|_{1}^{2}\|f_{n,\gamma'} - f\|_{2}^{2}} +
 \frac{\lambda\|K\|_1(1 +\|K\|_1)\|f\|_{\infty}}{3n}\\
 & \leqslant &
 \theta\|f_{n,\gamma'} - f\|_{2}^{2} +\frac{c\lambda}{\theta n},
\end{eqnarray*}
with $c = 7/6\|f\|_\infty\|K\|_1^2$. This is the announced inequality. $\Box$
%

% Subsubsection : Proof of Lemma Lerasle_type_inequality.

%
\subsubsection{Proof of Lemma \ref{Lerasle_type_inequality}}
First of all,
\begin{displaymath}
\|f_{n,\gamma} - f\|_{2}^{2} =
\|\widehat f_{n,\mathbf h_n(\gamma)} - f\|_{2}^{2} -
\|\widehat f_{n,\mathbf h_n(\gamma)} - f_{n,\gamma}\|_{2}^{2} -
2V_n(\gamma,\gamma).
\end{displaymath}
Then, by Lemma \ref{bound_V}, with probability larger than $1 - 2e^{-\lambda}$,
\begin{equation}\label{Lerasle_type_inequality_1}
(1 -\theta)\|f_{n,\gamma} - f\|_{2}^{2} +
\frac{\|K\|_{2}^{2}}{n \, \mathfrak h_n(\gamma)}
\leqslant
\|\widehat f_{n,\mathbf h_n(\gamma)} - f\|_{2}^{2}
+\Lambda_n(\gamma)
+\frac{\kappa_1\lambda}{\theta n}
\end{equation}
where
\begin{eqnarray*}
 \Lambda_n(\gamma) & := &
 \left|\frac{\|K\|_{2}^{2}}{n \, \mathfrak h_n(\gamma)} -
 \|\widehat f_{n,\mathbf h_n(\gamma)} - f_{n,\gamma}\|_{2}^{2}\right|\\
 & = &
 \left|\frac{U_n(\gamma,\gamma)}{n^2}
 +\frac{W_n(\gamma)}{n}
 -\frac{1}{n^2}\sum_{k = 1}^{n}\|f_{h_k(\gamma)}\|_{2}^{2}\right|
\end{eqnarray*}
and
\begin{displaymath}
W_n(\gamma) :=
\frac{1}{n}\sum_{k = 1}^{n}(
Y_k(\gamma) -\mathbb E(Y_k(\gamma)))
\end{displaymath}
with, for any $k\in\{1, \dots, n\}$,
\begin{displaymath}
Y_k(\gamma) :=
\|K_{h_k(\gamma)}(X_k -\cdot) - f_{h_k(\gamma)}\|_{2}^{2}
\end{displaymath}
and
\begin{eqnarray*}
 \mathbb E(Y_k(\gamma)) & = &
 \mathbb E(\|K_{h_k(\gamma)}(X_k -\cdot)\|_{2}^{2}) +\|f_{h_k(\gamma)}\|_{2}^{2}
 -2\langle\mathbb E(K_{h_k(\gamma)}(X_k -\cdot)),f_{h_k(\gamma)}\rangle_2\\
 & = &
 \frac{\|K\|_{2}^{2}}{h_k(\gamma)} -\|f_{h_k(\gamma)}\|_{2}^{2}.
\end{eqnarray*}
Since
\begin{displaymath}
|Y_k(\gamma)|\leqslant
4\|K_{h_k(\gamma)}\|_{2}^{2}\leqslant
M_n(\gamma) := 4\frac{\|K\|_{2}^{2}}{h_n(\gamma_{\max})}
\end{displaymath}
and
\begin{displaymath}
\mathbb E(Y_k(\gamma)^2)\leqslant
M_n(\gamma)\mathbb E(Y_k(\gamma))
\leqslant
4\frac{\|K\|_{2}^{4}}{h_n(\gamma_{\max})h_k(\gamma)},
\end{displaymath}
by Bernstein's inequality, with probability larger than $1 - 2e^{-\lambda}$,
\begin{eqnarray*}
 |W_n(\gamma)|
 & \leqslant &
 2\sqrt{\frac{4\|K\|_{2}^{2}\lambda}{\theta n h_n(\gamma_{\max})}\times\frac{\theta}{2}\frac{\|K\|_{2}^{2}}{\mathfrak h_n(\gamma)}}
 +\frac{4\|K\|_{2}^{2}\lambda}{3nh_n(\gamma_{\max})}\\
 & \leqslant &
 \frac{\theta}{2}\frac{\|K\|_{2}^{2}}{\mathfrak h_n(\gamma)} +
 \frac{\kappa_2\lambda}{\theta n h_n(\gamma_{\max})}.
\end{eqnarray*}
Moreover, by Jensen's inequality,
\begin{eqnarray*}
 \|f_{h_k(\gamma)}\|_{2}^{2} & = &
 \int_{-\infty}^{\infty}\left(\int_{-\infty}^{\infty}
 f(x + h_k(\gamma)y)K(y)dy\right)^2dx\\
 & \leqslant &
 \|K\|_{1}^{2}\int_{-\infty}^{\infty}\int_{-\infty}^{\infty}f(x + h_k(\gamma)y)^2\frac{|K(y)|}{\|K\|_1}dydx
 \leqslant
 \|f\|_{\infty}\|K\|_{1}^{2}.
\end{eqnarray*}
Then, by Lemma \ref{U_statistic_bound}, with probability larger than $1 - \kappa_3e^{-\lambda}$,
\begin{eqnarray*}
 \Lambda_n(\gamma) & \leqslant &
 \theta\frac{\|K\|_{2}^{2}}{n \,\mathfrak h_n(\gamma)} +
 \kappa_4\left(\frac{\lambda^2}{\theta n} +\frac{\lambda^3}{\theta n^2h_n(\gamma_{\max})}\right).
\end{eqnarray*}
Therefore, by Inequality (\ref{Lerasle_type_inequality_1}), with probability larger than $1 -\kappa_5e^{-\lambda}$,
\begin{displaymath}
\|f_{n,\gamma} - f\|_{2}^{2} +
\frac{\|K\|_{2}^{2}}{n \,\mathfrak h_n(\gamma)}
\leqslant
\frac{1}{1 -\theta}
\|\widehat f_{n,\mathbf h_n(\gamma)} - f\|_{2}^{2}+
\frac{\kappa_6}{\theta(1 -\theta)}
\left(\frac{\lambda^2}{n} +\frac{\lambda^3}{n^2h_n(\gamma_{\max})}\right).
\end{displaymath}
This is the announced result if we set $1+\varepsilon=1/(1-\theta)$, which gives 
$1/[\theta (1-\theta)]=(1+\varepsilon)^2/\varepsilon$. $\Box$

%
%\end{proof}
%

%

% References.

%

%
\end{document}